\def\ep{{\varepsilon}}
\def\R{\mathbb R}
\def\({\left(}
\def\){\right)}
\newtheorem{theo}{\textbf{Theorem}}[section]
\newtheorem{prop}[theo]{\textbf{Proposition}}
\newtheorem{cor}[theo]{\textbf{Corollary}}
\newtheorem{assumption}[theo]{\textbf{Assumption}}
\numberwithin{equation}{section}
\title{Quantitative estimates of the threshold phenomena for propagation in reaction-diffusion equations}
\date{}
\author{}
\begin{document}

\maketitle

\begin{center}
{\large\bf Matthieu Alfaro \footnote{IMAG, Univ. Montpellier, CNRS, Montpellier, France.}, Arnaud Ducrot \footnote{Normandie Univ, UNIHAVRE, LMAH, FR-CNRS-3335, ISCN, 76600 Le Havre, France.}, Gr\'egory Faye \footnote{IMT, UMR 5219, Universit\'e de Toulouse, UPS-IMT, F-31062 Toulouse Cedex 9, France.}}\\
[2ex]

\end{center}



\vspace{10pt}

\begin{abstract}  We focus on the (sharp) threshold phenomena arising in some reaction-diffusion equations supplemented with some compactly supported initial data. In the so-called ignition and bistable cases, we prove the first sharp quantitative estimate on the (sharp) threshold values. Furthermore, numerical explorations allow to conjecture some refined estimates. Last we provide related results in the case of a degenerate monostable nonlinearity \lq\lq not enjoying the hair trigger effect\rq\rq.
\\

\noindent{\underline{Key Words:} extinction, propagation, threshold phenomena, sharp threshold phenomena.}\\

\noindent{\underline{AMS Subject Classifications:} 35K57 (Reaction-diffusion equations), 35K15 (Initial value problems for second-order parabolic equations), 35B40 (Asymptotic behavior of solutions).}
\end{abstract}

\section{Introduction}\label{s:intro}

In this work we consider the solution $u=u(t,x)$ of the reaction-diffusion equation
\begin{equation}\label{eq}
u_t=\Delta u+f(u), \quad t>0,\; x\in \R^N,
\end{equation}
supplemented with some radially symmetric compactly supported initial data. Typically, $f$ is a so-called ignition or bistable nonlinearity. It is well established \cite{flores}, \cite{Zla-06} that, roughly speaking, \lq\lq small'' initial data lead to extinction, whereas \lq\lq large'' initial data 
lead to propagation, which is referred as a {\it threshold phenomenon}. It was more recently proved \cite{Du-Mat-10}, \cite{Pol-11}, \cite{Mur-Zho-13, Mur-Zho-17} that, in many situations, a {\it sharp} threshold phenomenon does occur. Here, we say that there is a sharp threshold behavior when, for any strictly increasing family of initial data exhibiting extinction for sufficiently small values of the parameter and propagation for sufficiently large values of the parameter, there is exactly one member of the family for which neither extinction nor propagation occurs. For example, for smooth unbalanced bistable nonlinearity, in one space dimension, it is well known that at the threshold value, the corresponding solution of \eqref{eq} converges to the unique ground state centered at the origin \cite{flores}, \cite{Zla-06}, \cite{Du-Mat-10}, \cite{Pol-11}, \cite{Mur-Zho-13,Mur-Zho-17}. By a ground state of this equation we mean a positive stationary solution that decays to zero at infinity, that is a solution of
\begin{equation*}
0=\Delta u+f(u),\; x\in \R^N, \; u>0, \; \underset{|x|\rightarrow+\infty}{\lim}u(x)=0.
\end{equation*} 
As far as we know, no quantitative estimate of this (sharp) threshold phenomenon exists in the literature. The goal of the present work is to fill this gap by providing such estimates in some asymptotic regimes.

\medskip

Initiation of propagating fronts or pulses has a long history with multiple applications in both physical and natural sciences, e.g. flame combustion, epidemic outbreaks, ecological or bacterial invasion. We can in particular mention applications in neuroscience where one is typically interested to quantify the depolarization effect of local stimulation of an electrode on a nerve fiber \cite{Terman}, \cite{McK-Moll}, \cite{Neu-etal-97} or to characterize the critical stimulus amplitude and timing needed to generate waves in visual cortex \cite{FK18}.  Several approaches have been developed to derive criteria for initiation of propagation in bistable or excitable medium in one space dimension. For McKean-type nonlinearity $f(u)=-u+H(u-a)$, $a\in(0,1/2)$, and $H$ the Heaviside step function, an associated free boundary problem was studied by looking at the evolution of $m(t)=\sup\left\{ x>0 ~|~ u(t,x)=a\right\}$. By characterizing the asymptotic limiting behavior of $m(t)$, it was possible to obtain sharp threshold of propagation \cite{Terman}, \cite{McK-Moll} for some classes of initial conditions which cross the threshold $u=a$ only twice, and we refer to \cite{Moll-Ros} for a numerical treatment of the problem. More precisely, there is a trichotomy. If $m(t)$ is well defined for all time and $\underset{t\rightarrow+\infty}{\limsup}~m(t)=+\infty$, then there is propagation. If there exists $T>0$ such that $\underset{t\rightarrow T}{\liminf}~m(t)=0$, then there is extinction. Finally, if $m(t)$ remains uniformly bounded for all times, then the corresponding solution $u(t,x)$ converges to the unique ground state to \eqref{eq}. Another method to derive quantitative conditions for initiation of propagation in the one-dimensional case was obtained by rewriting the reaction-diffusion equation \eqref{eq} as a gradient-flow of the energy
\begin{equation}\label{energy}
\mathcal{E}(u)=\int \left[\frac 1 2|\nabla u|^2 + F(u) \right]dx,
\end{equation}
where $F(u)=-\int_0^u f(v)dv$, and projecting this gradient flow onto an approximate solution space \cite{Neu-etal-97}. A common choice for the approximation space consists of the amplitude and width of Gaussian profiles. This projects the infinite dimensional dynamical systems \eqref{eq} into a two-dimensional space and the criterion for initiating propagation takes the form of separatrices given by the stable manifold of the ground state which is a saddle node in this two-dimensional projected space. The idea that the ground state's stable manifold forms a threshold surface separating initial conditions leading to propagation or extinction was  used \cite{IB08} to derive estimates of the sharp threshold by 
approximating the stable manifold by its tangent linear space. Let us finally remark that the free boundary approach was recently applied to neural field equation with Heaviside step nonlinearity where it is possible to obtain explicit formula for the sharp threshold \cite{FK18} in this nonlocal setting.

Previous works on (sharp) threshold phenomena for \eqref{eq} rely on different tools such as the so-called {\it zero number argument} \cite{Du-Mat-10}, the combination of parabolic Liouville theorems and results on exponential separation and principal Floquet bundle \cite{Pol-11}, or energy methods \cite{Mur-Zho-13, Mur-Zho-17}, \cite{Alf-Duc-18}. Let us mention that the work \cite{Alf-Duc-18}, primarly concerned with a population dynamics model \cite{Kanarek-Webb} for {\it evolutionary rescue}, shows in particular that the problem 
$$
u_t=\Delta u +u(u-\theta(t))(1-u)
$$
where $\theta (t)\searrow 0$ as $t\to +\infty$, enjoys the so-called {\it hair trigger effect}: for any nonnegative and nontrivial initial data, the solution tends to 1 as $t\to +\infty$, locally uniformly in $x\in \R^{N}$. Since quantitative estimates for the bistable sharp threshold were missing, the main assumption was $\int _0^{+\infty}\theta(s)ds<+\infty$, which may be \lq\lq not optimal''.

Our approach is rather more direct, in the spirit of \cite{Zla-06}, and consists in constructing new and elaborate sub-and supersolutions that enable to capture the underlying time-space scaling of the problem. The sharp quantitative estimates we obtain may lead to refine existing results, such as those in \cite{Alf-Duc-18} for instance.

\medskip

We state below our assumptions and main results.

\begin{assumption}[Nonlinearity $f$]\label{hyp:nonlinearite}
The function $f:\R\to\R$ is locally Lipschitz continuous. There is a threshold $\theta\in(0,1)$ such that
\begin{equation}\label{f-zero}
f(u)=0\text{ for all }u\in (-\infty,0]\cup\{\theta\}\cup [1,\infty),
\end{equation}
and
\begin{equation}
\label{f-signe}
f(u)>0,\;\forall u\in (\theta,1),\quad \text{ and }\quad  \begin{cases}f(u)< 0,\; \forall u\in(0,\theta),\; \text{(BISTABLE CASE)}\\ \text{ or } \\ f(u)=0, \; \forall u\in(0,\theta),\; \text{ (IGNITION CASE).} \end{cases}
\end{equation}
In the bistable case, we further require
\begin{equation}
\int_0^1 f(s)ds>0.
\end{equation}
Moreover, the following  structure conditions hold.
\begin{itemize} 
\item[(i)]There are $r^+>0$ and $\delta^+\in (\theta,1)$ such that
\begin{equation*}
f(u)\leq r^+ (u-\theta),\;\forall u\in [\theta,\delta^+].
\end{equation*}
\item[(ii)] There are $r^->0$ and $\delta^-\in (\theta,1)$ such that
\begin{equation*}
f(u)\geq r^-\left(u-\theta\right),\;\forall u\in (-\infty,\delta^-].
\end{equation*}
\end{itemize}
\end{assumption}
 
Notice that the usual cubic bistable nonlinearity 
$$
f(u)=ru(u-\theta)(1-u),\;\forall u\in [0,1],
$$ 
where $r>0$, satisfies the above set of assumptions  as soon as $\theta<\frac{1}{2}$. Also,  an ignition linearity  satisfies the above items $(i)$ and $(ii)$ as soon as
$$
0<\liminf_{u\to \theta^+} \frac{f(u)}{u-\theta}\leq \limsup_{u\to \theta^+} \frac{f(u)}{u-\theta}<+\infty.
$$

Now, for $\ep\in (0,1-\theta)$ and $L>0$,  we consider the family of initial data $\phi_L^\ep$  given by
\begin{equation}\label{initial-data}
\phi_L^\ep(x)=(\theta+\ep) \mathbf 1_{B_L}(x),\;x\in\R^{N},
\end{equation}
wherein $ \mathbf 1 _A$ denotes the characteristic function of the set $A$, and $B_L$ the open ball of center $0\in \R^{N}$, and radius $L$. We denote by $u_L^\ep$ the solution of \eqref{eq} starting from the initial data $\phi_L^\ep$. Then this family of solutions enjoys the so-called threshold property.

\begin{prop}[Threshold property]\label{prop:threshold}
Let Assumption \ref{hyp:nonlinearite} hold. Let $\ep\in (0,1-\theta)$ be given. Then there exist $\widehat{L}_\ep>0$ and $\widetilde L_\ep>0$ such that
\begin{equation*}
\lim_{t\to+\infty} u_L^\ep (t,x)= \begin{cases} 
0\; \text{ uniformly in $\R^{N}$} \; &\text{ if }\; 0<L<\widehat{L}_\ep,\\
1\; \text{ locally uniformly in $\R^{N}$} \; &\text{ if }\; L>\widetilde{L}_\ep.
\end{cases}
\end{equation*}
\end{prop}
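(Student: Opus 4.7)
The plan is to prove the two halves of the dichotomy by independent comparison arguments. Since $0\leq \phi_L^\ep\leq 1$ and $f(0)=f(1)=0$, the constants $0$ and $1$ are stationary solutions and the parabolic comparison principle gives $0\leq u_L^\ep\leq 1$ globally in time; hence $u_L^\ep$ exists for all $t>0$ and only the large-time behaviour needs to be analysed.

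\textbf{Small $L$: extinction.} As $f$ is locally Lipschitz with $f(0)=0$ there is $K>0$ such that $f(u)\leq Ku$ for all $u\in[0,1]$. Setting $v=e^{-Kt}u_L^\ep$, one checks that $v_t\leq \Delta v$, so the heat-semigroup representation yields
\[
u_L^\ep(t,x)\leq e^{Kt}\bigl(G_t\ast\phi_L^\ep\bigr)(x),\qquad t>0,\ x\in\R^N,
\]
where $G_t$ is the Gaussian kernel. From $\|G_t\ast\phi_L^\ep\|_\infty\leq C(\theta+\ep)L^N t^{-N/2}$ evaluated at $t=1$, choosing $\widehat L_\ep$ small enough forces $u_L^\ep(1,\cdot)\leq \theta/2$ on $\R^N$. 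Because $f\leq 0$ on $[0,\theta]$ in both the bistable and ignition cases, the constant $\theta$ is then a supersolution on $[1,+\infty)$, so $u_L^\ep\leq \theta$ for all $t\geq 1$, which in turn makes $u_L^\ep$ a subsolution of the pure heat equation for $t\geq 1$. Since $u_L^\ep(1,\cdot)$ is integrable, the bound $\|u_L^\ep(t,\cdot)\|_\infty\leq C'(t-1)^{-N/2}\|u_L^\ep(1,\cdot)\|_1$ yields uniform decay to $0$.

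\textbf{Large $L$: propagation.} For this part I would invoke the classical invasion result for non-monostable reactions: for $f$ of bistable type with $\int_0^1 f>0$, or of ignition type, any compactly supported nonnegative initial datum that exceeds $\theta+\ep/2$ on a ball of radius at least some $\rho=\rho(\ep)>0$ produces a solution that tends to $1$ locally uniformly in $\R^N$. The proof of this fact proceeds by constructing a compactly supported radial subsolution, either from the stationary Dirichlet problem $-\Delta v=f(v)$ on a large ball (whose nontrivial nonnegative solution exists precisely thanks to the positivity of $\int_0^1 f$, respectively of $f$ on $(\theta,1)$), or from a shifted copy of the one-dimensional bistable travelling wave with positive speed, after absorbing the radial curvature term $(N-1)/r$ by a time-decaying correction. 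Taking $\widetilde L_\ep>\rho(\ep)$ then works: $\phi_L^\ep\geq(\theta+\ep)\mathbf 1_{B_{\rho(\ep)}}\geq(\theta+\ep/2)\mathbf 1_{B_{\rho(\ep)}}$ for all $L>\widetilde L_\ep$, and comparison gives $u_L^\ep\to 1$ locally uniformly.

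\textbf{Main difficulty.} The extinction half is a routine heat-kernel/Gronwall computation. The genuine work is on the propagation side: the subsolution has to remain pointwise below $\theta+\ep$ initially (a level only $\ep$ above the ``dead'' zone of $f$) and yet be supported on a domain large enough that diffusion cannot quench the reaction. This is where the structural assumption $\int_0^1 f>0$ in the bistable case, or the positivity of $f$ on $(\theta,1)$ in the ignition case, is essential, and where the radial geometry of $\R^N$ (with its $(N-1)/r$ drift) must be controlled by choosing $\widetilde L_\ep$ correspondingly large.
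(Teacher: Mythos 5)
Your overall architecture — global existence by trapping between $0$ and $1$, a direct heat-kernel argument for extinction, and a citation to the classical invasion theory plus comparison for propagation — is the same route the paper takes (the paper simply outsources the extinction part to \cite{Aro-Wei-78}, \cite{Mur-Zho-17} and its own Theorem \ref{th:extinction-deg-mono}, and the propagation part to \cite[Lemma 3.5]{Pol-11} after a $C^1$-modification of $f$). Your extinction half is complete and correct: the Lipschitz bound $f(u)\le Ku$, the heat-kernel estimate $\|G_1\ast\phi_L^\ep\|_\infty\le C(\theta+\ep)L^N$ for small $L$, the fact that $\theta$ is a supersolution once the solution drops below it, and the subsequent $L^1$--$L^\infty$ heat decay all go through without change.

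The propagation half as written, however, contains a genuine gap, and it is precisely at the spot you flag as the main difficulty. You state that the subsolution must sit pointwise below $\theta+\ep$ at $t=0$, but neither of the two objects you propose to use can do that. The nontrivial radial solution of $-\Delta v=f(v)$ in $B_R$, $v|_{\partial B_R}=0$, has $\max v$ bounded below by the value $\alpha^\star>\theta$ at which $\int_0^{\alpha^\star}f=0$ (for the cubic with $\theta=1/4$ this is $\alpha^\star\approx 0.39$), uniformly in $R$ above criticality; and a shifted one-dimensional traveling wave has range $(0,1)$, so its maximum also exceeds $\theta+\ep$ for $\ep$ small. So neither is $\le\phi_L^\ep$ initially, and comparison cannot be started at $t=0$. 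The missing ingredient is an intermediate growth step: for $L$ huge, the solution started at $\theta+\ep$ on $B_L$ increases pointwise on a slowly shrinking ball for a time of order $L$ (because $f>0$ above $\theta$ and the boundary influence spreads only with parabolic speed), until it dominates a fixed level $\alpha>\alpha^\star$ on a ball of the critical Dirichlet radius; only then can the stationary subsolution or the curvature-corrected wave be slid underneath. Equivalently, one can do what the paper does and replace $f$ by a $C^1$ minorant $\widehat f\le f$ with $\widehat f'(0)<0$ and still $\int_0^1\widehat f>0$, then invoke \cite[Lemma 3.5]{Pol-11} directly for $\widehat f$ and conclude by comparison for $f$. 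Either patch closes the argument; as it stands, the proposal asserts the existence of a suitable subsolution but describes candidates that do not satisfy the stated constraint.
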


In a one dimensional framework, such threshold results were initiated by the seminal works of Kanel' \cite{Kan-64}, Aronson and Weinberger \cite{Aro-Wei-75}, Fife and McLeod \cite{Fif-Leo-77}. We also refer to Du and Matano \cite{Du-Mat-10} for more recent developments.

In arbitrary dimension and in our context, the existence of $\widehat L_\ep$ for the ignition case can, for instance, by found in \cite{Aro-Wei-78} or \cite{Mur-Zho-17}.  Note that the study of the ignition case is sufficient to conclude to the existence of $\widehat L_\ep$ for the bistable case due to the comparison principle. Notice also that the existence (as well as some estimates) of such small radii leading to extinction will also be proved in this work  for monostable nonlinearities (see Theorem \ref{th:extinction-deg-mono}), and thus for the ignition case due to comparison arguments.

In arbitrary dimension, the proof of the existence of $\widetilde L_\ep$ can, for instance, be found in \cite[Lemma 3.5]{Pol-11} in the case where $f$ is bistable, of the class $C^{1}$, and satisfies $f'(0)<0$. In our  context, since $f$ is Lipschitz continuous on $[0,1]$, we can use a small $C^1-$modification from below of the nonlinearity $f$ and construct a suitable subsolution converging to $1$ when $L$ is large enough. Hence the existence of $\widetilde L_\ep$ follows from the comparison principle for the bistable case, and thus for the ignition case due to comparison arguments.

\medskip

Now, for each $\ep\in (0,1-\theta)$, we consider the quantities $0<L_\ep^{ext}\leq L_\ep^{prop}$ given by
\begin{equation*}
\begin{split}
&\displaystyle L_\ep^{ext}:=\sup\left\{L>0:\;\;\lim_{t\to\infty} u_L^\ep(t,\cdot)=0\text{ uniformly in $\R^N$ }\right\},\\
&\displaystyle L_\ep^{prop}:=\inf\left\{L>0:\;\;\lim_{t\to\infty} u_L^\ep(t,\cdot)=1\text{ in }C^0_{\rm loc}(\R^N)\right\}.
\end{split}
\end{equation*}

In this work we derive sharp estimates for the above quantities in the asymptotic $\ep\to 0$.
We roughly prove that
$$
L_\ep^{ext} \text{ and } L_\ep^{prop} \approx \ln \frac{1}{\ep}\text{ as }\ep\ll 1.
$$
Our precise result reads as follows.

\begin{theo}[Quantitative estimates of the threshold]\label{th:main}
Let Assumption \ref{hyp:nonlinearite} hold. Then there are two constants $0<C^-<C^+$ such that
\begin{equation*}
C^-\leq \liminf_{\ep\to 0^+}\frac{L_\ep^{ext}}{\ln\frac{1}{\ep}}\leq \limsup_{\ep\to 0^+}\frac{L_\ep^{prop}}{\ln\frac{1}{\ep}}\leq C^+.
\end{equation*}
Moreover, the constants $C^\pm$ can be chosen as 
$$
C^-=\frac{1}{\sqrt{r^+}}, \quad C^+=\frac{2}{\sqrt{r^-}},
$$
where $r^\pm$ are as in Assumption \ref{hyp:nonlinearite}, items $(i)$ and $(ii)$.
\end{theo}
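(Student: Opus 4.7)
I prove the two inequalities separately by constructing sub- and supersolutions tailored to the logarithmic time–space scaling.

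\textbf{The propagation bound.} Fix $\gamma > 2/\sqrt{r^-}$ and let $L = \gamma\ln(1/\ep)$. By Assumption~\ref{hyp:nonlinearite}(ii), the function $\underline u = \theta + V$ is a subsolution of the PDE (as long as $\underline u \le \delta^-$) whenever $V$ solves the linear problem
\[
V_t = \Delta V + r^- V, \qquad V(0,x) = \ep\,\mathbf 1_{B_L}(x) - \theta\,\mathbf 1_{\R^N\setminus B_L}(x),
\]
so that $\underline u(0,\cdot) = \phi_L^\ep$. The explicit solution is
\[
V(t,x) = e^{r^- t}\bigl((\ep+\theta)\,P_t(x) - \theta\bigr), \qquad P_t(x) = (G_t \ast \mathbf 1_{B_L})(x),
\]
where $G_t$ is the heat kernel on $\R^N$. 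Evaluating at $x = 0$ and $t_* = \ln(1/\ep)/r^-$, the leading term $\ep e^{r^- t_*}$ is of order one, while the harmful term $\theta e^{r^- t_*}(1-P_{t_*}(0))$, controlled by the Gaussian tail $\exp(-L^2/(4 t_*))$, amounts to $\exp(\ln(1/\ep) - L^2/(4 t_*))$. This correction is $o(1)$ precisely when $L^2/(4 t_*) > \ln(1/\ep)$, i.e.\ $L > 2\ln(1/\ep)/\sqrt{r^-}$. Hence $\underline u(t_*,0) \ge \delta^-$ and, by continuity, the same holds on a ball of fixed radius. Applying Proposition~\ref{prop:threshold} from time $t_*$ with the enlarged datum yields $u \to 1$ locally uniformly.

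\textbf{The extinction bound.} Fix $\gamma < 1/\sqrt{r^+}$ and let $L = \gamma\ln(1/\ep)$. I take the Gaussian-type supersolution
\[
\bar u(t,x) = \theta + A\,(t+\tau_0)^{-N/2}\,\exp\!\left(r^+ t - \frac{|x|^2}{4(t+\tau_0)}\right),
\]
which is an explicit solution of $\bar u_t = \Delta\bar u + r^+(\bar u - \theta)$; by Assumption~\ref{hyp:nonlinearite}(i), it is a supersolution of the nonlinear PDE as long as $\bar u \in [\theta,\delta^+]$. The pair $(A,\tau_0)$ is chosen so that $\bar u(0,\cdot) \ge \phi_L^\ep$, which gives $A \ge \ep\,\tau_0^{N/2}\,e^{L^2/(4\tau_0)}$, and so that the amplitude $B(t) = A(t+\tau_0)^{-N/2}e^{r^+ t}$ stays below $\delta^+-\theta$ on a sufficiently long interval $[0,T_*]$. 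The balancing condition reads, up to subleading corrections, $L\sqrt{r^+}/2 + r^+ T_* \lesssim \ln(1/\ep)$, which is compatible with a useful $T_*$ precisely when $L \lesssim \ln(1/\ep)/\sqrt{r^+}$, giving $C^- = 1/\sqrt{r^+}$. At time $T_*$ the resulting bump, combined with the direct Gaussian decay of $u$ at spatial infinity (inherited from the compact support of $\phi_L^\ep$), is dominated by a suitable translate $U(\cdot - x_*)$ of the ground state, and classical subthreshold convergence results then force $u\to 0$ uniformly.

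\textbf{Main difficulty.} The extinction bound is the delicate one. Unlike propagation, where the linear equation naturally produces a subsolution crossing $\delta^-$ once $L$ exceeds the KPP-type barrier $2\ln(1/\ep)/\sqrt{r^-}$, the supersolution for extinction must cope with exponential amplification $e^{r^+ t}$ over the time scale $\ln(1/\ep)/r^+$. The Gaussian ansatz above keeps the amplitude below $\delta^+$ only on a finite window, and the delicate step is matching the endpoint profile $\bar u(T_*,\cdot)$ with the stable manifold of the ground state so that classical subthreshold convergence kicks in. This matching fixes the somewhat conservative constant $C^- = 1/\sqrt{r^+}$, which is likely not sharp.
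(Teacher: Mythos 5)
Your propagation bound is essentially the paper's argument in disguise: the paper rescales so that $r^-=1$ and compares the rescaled solution to the explicit solution of the toy model $w_t=\Delta w+w-\theta$ with datum $(\theta+\ep)\mathbf{1}_{B_L}$, while you directly build $\underline u=\theta+V$ with $V$ solving $V_t=\Delta V+r^-V$. These are the same linearization-around-$\theta$ idea, and the Gaussian tail estimate that yields the constant $2/\sqrt{r^-}$ is the same. Two details in your write-up need repair, though: the subsolution property $\underline u_t-\Delta\underline u\le f(\underline u)$ only holds on the region $\{\underline u\le\delta^-\}$, and since $\max_x\underline u(t,x)\le\theta+\ep e^{r^-t}$ crosses $\delta^-$ well before $t_*=\ln(1/\ep)/r^-$, you must stop at the first crossing time (replacing $\ln(1/\ep)$ by $\ln((\delta^--\theta)/\ep)$, which is asymptotically the same). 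More importantly, "by continuity the same holds on a ball of fixed radius'' is not enough: you need $\underline u(t_*,\cdot)\ge\alpha'>\theta$ on a ball of radius at least the fixed propagation threshold $\widetilde L_{\alpha'}$ from Proposition \ref{prop:threshold}. The paper gets this for free because its estimate holds on $B_{kL_\ep}$ with $L_\ep\to\infty$; with your setup you should observe the same uniformity over $|x|\le kL$, which does follow from the Gaussian tail bound, but it must be stated.

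Your extinction bound, however, has a genuine structural gap. The supersolution $\bar u(t,x)=\theta+A(t+\tau_0)^{-N/2}\exp\bigl(r^+t-|x|^2/(4(t+\tau_0))\bigr)$ satisfies $\bar u>\theta$ for every $(t,x)$, so the comparison $u\le\bar u$ can never deliver the decisive conclusion $u(T,\cdot)\le\theta$, which is what actually triggers extinction (once the solution is $\le\theta$ everywhere, $f(u)\le0$ in both the ignition and bistable cases and the solution is dominated by pure diffusion). Moreover, once $B(t)=A(t+\tau_0)^{-N/2}e^{r^+t}$ exceeds $\delta^+-\theta$ the supersolution inequality fails, so after your $T_*$ you have no control at all. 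You try to plug this hole by comparing $u(T_*,\cdot)$ to a translate of the ground state and appealing to "classical subthreshold convergence,'' but this is precisely the kind of argument the paper is designed to avoid: such a comparison principle below the ground state is not a standard fact in $\R^N$ for $N\ge2$, it is not quantitative, and even in $1$D it would require an argument you do not give. The paper instead takes the supersolution $W(t,x)=v(t,x)\varphi(t)$ where $v$ is the heat-semigroup evolution of $\phi_L^\ep$ (so $W$ decays to $0$ at spatial infinity, unlike $\theta+V$) and $\varphi$ solves $\varphi'=\varphi-\theta/V(t)$; the whole point of that construction is that $\max_x W(t,x)=V(t)\varphi(t)$ can and does fall below $\theta$ at a finite time, and it is the careful estimate of that crossing time which produces $C^-=1/\sqrt{r^+}$. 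Your "balancing condition $L\sqrt{r^+}/2+r^+T_*\lesssim\ln(1/\ep)$'' is not derived (the natural quantity is $L^2/(4\tau_0)+r^+T_*$, and there is no constraint that pins down $\tau_0$ without specifying what must happen at $T_*$), and without the $u\le\theta$ mechanism it does not lead anywhere. In short, the extinction half needs the paper's product-type supersolution; the additive ansatz $\theta+V$ cannot do the job.
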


When $L_\ep^{ext}=L_\ep^{prop}:=L_\ep^{\star}$ for all small enough $\ep>0$ , we say that the threshold is sharp. Theorem \ref{th:main} immediately yields the following corollary.

\begin{cor}[Sharp threshold]\label{cor:sharp}
Let Assumption \ref{hyp:nonlinearite} hold. If the threshold is sharp then
\begin{equation*}
\frac{1}{\sqrt{r^+}}\leq \liminf_{\ep\to 0^+}\frac{L_\ep^\star}{\ln\frac{1}{\ep}}\leq \limsup_{\ep\to 0^+}\frac{L_\ep^\star}{\ln\frac{1}{\ep}}\leq \frac{2}{\sqrt{r^-}}.
\end{equation*}
\end{cor}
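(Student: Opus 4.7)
The plan is to observe that Corollary \ref{cor:sharp} is essentially a direct substitution into Theorem \ref{th:main}, once we have unpacked the definition of a sharp threshold. Assuming the threshold is sharp means precisely that, for all sufficiently small $\ep>0$, the two a priori distinct quantities $L_\ep^{ext}$ and $L_\ep^{prop}$ collapse to a single common value $L_\ep^\star$. So the strategy is: take the chain of inequalities provided by Theorem \ref{th:main}, and replace both $L_\ep^{ext}$ and $L_\ep^{prop}$ by $L_\ep^\star$.

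Concretely, I would first recall the conclusion of Theorem \ref{th:main} with the explicit values $C^-=1/\sqrt{r^+}$ and $C^+=2/\sqrt{r^-}$. Then, using the sharpness hypothesis, divide by $\ln\frac{1}{\ep}>0$ (valid for $\ep$ small enough) and write
\begin{equation*}
\frac{1}{\sqrt{r^+}}\leq \liminf_{\ep\to 0^+}\frac{L_\ep^{ext}}{\ln\frac{1}{\ep}}=\liminf_{\ep\to 0^+}\frac{L_\ep^\star}{\ln\frac{1}{\ep}}\leq \limsup_{\ep\to 0^+}\frac{L_\ep^\star}{\ln\frac{1}{\ep}}=\limsup_{\ep\to 0^+}\frac{L_\ep^{prop}}{\ln\frac{1}{\ep}}\leq \frac{2}{\sqrt{r^-}},
\end{equation*}
where the first and last inequalities use Theorem \ref{th:main}, and the middle inequality is the elementary $\liminf\leq \limsup$.

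Since no new analytic input is needed, there is no real obstacle: the corollary is purely a bookkeeping consequence of the theorem together with the definition of sharpness. The only minor point worth mentioning is that the sharpness assumption need only hold for all sufficiently small $\ep>0$ (which is exactly the regime in which $\liminf_{\ep\to 0^+}$ and $\limsup_{\ep\to 0^+}$ are computed), so replacing $L_\ep^{ext}$ and $L_\ep^{prop}$ by the common value $L_\ep^\star$ in the asymptotic quantities is legitimate. Hence the proof is a two-line argument consisting essentially of invoking Theorem \ref{th:main} and substituting.
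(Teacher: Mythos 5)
Your proposal is correct and matches the paper exactly: the paper states that Theorem \ref{th:main} ``immediately yields'' the corollary and provides no further argument, which is precisely your observation that sharpness lets you substitute $L_\ep^\star$ for both $L_\ep^{ext}$ and $L_\ep^{prop}$ in the chain of inequalities. Nothing more is needed.
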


Let us recall that the sharp threshold phenomena was first analysed by Zlat\v{o}s \cite{Zla-06} in the one dimensional setting. 

In the bistable case, the threshold is known to be sharp when we further assume that $f$ is of the class $C^1$ and $f'(0)<0$, see \cite{Pol-11}, which is the case of the usual cubic nonlinearity \eqref{bistable}. We also refer to \cite[Theorem 4]{Mur-Zho-17} for other conditions insuring that the threshold is sharp in the bistable case.

In the ignition case, when $N=1$, the threshold is known to be sharp when  $f$ is nondecreasing in some right neighborhood of $\theta$, see \cite[Theorem 1.4]{Du-Mat-10}, which is the case of the nonlinearity \eqref{ignition}. When $N\geq 2$, we refer to  \cite[Theorem 5, Theorem 6]{Mur-Zho-17} for conditions ensuring that the threshold is sharp in the ignition case.

We now provide a corollary about the existence and the behavior as $\ep\to 0$ for the sharp threshold for the two following prototype nonlinearities
 \begin{equation}\label{ignition}
\text{Ignition: }f(u)=r(u-\theta)(1-u)\mathbf{1}_{(\theta,1)}(u),
\end{equation}
and
\begin{equation}\label{bistable}
\text{Bistable: }f(u)=ru(u-\theta)(1-u)\mathbf{1}_{(0,1)}(u),
\end{equation}
we shall use for numerical validations in Section \ref{s:numeric}. Here $r>0$ and $\theta\in (0,1)$ are given parameters with $\theta<1/2$ in the bistable case \eqref{bistable}.
In these two prototype situations, our main results above rewrite as follows.

\begin{cor}[Sharp threshold for prototype nonlinearities \eqref{ignition} and \eqref{bistable}]\label{cor:prototype}
Let $\ep\in (0,1-\theta)$ be given. We consider problem \eqref{eq} with the nonlinearities \eqref{ignition} and \eqref{bistable}. Then the family of initial data $\phi_L^\ep$ defined in \eqref{initial-data} exhibits a sharp threshold $L_\ep^\star$ that satisfies
$$
C^-\leq\liminf_{\ep\to 0}\frac{L_\ep^\star}{\ln\frac{1}{\ep}}\leq\limsup_{\ep\to 0} \frac{L_\ep^\star}{\ln\frac{1}{\ep}}\leq C^+,
$$
where the constants $C^\pm$ read as
$$
C^{-}=\frac{1}{\sqrt{r(1-\theta)}} ,\quad C^{+}=\frac{2}{\sqrt{r(1-\theta)}}\; \text{ for \eqref{ignition}},
$$
while
$$
C^{-}=\frac{1}{\sqrt{r\theta (1-\theta)}} ,\quad C^{+}=\frac{2}{\sqrt{r\theta (1-\theta)}}\; \text{ for \eqref{bistable}}.
$$
\end{cor}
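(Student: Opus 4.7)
The strategy is to derive Corollary \ref{cor:prototype} as a direct consequence of Corollary \ref{cor:sharp}, so the work splits into two largely independent tasks: (a) checking that the threshold is sharp for each of the two prototype nonlinearities, and (b) identifying admissible constants $r^\pm$ in Assumption \ref{hyp:nonlinearite}(i)-(ii) that match the announced $C^\pm$.

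For task (a), in the bistable case \eqref{bistable} the nonlinearity is of class $C^1$ with $f'(0)=-r\theta<0$, so sharpness follows from \cite{Pol-11}. In the ignition case \eqref{ignition}, $u\mapsto r(u-\theta)(1-u)$ is increasing on a right neighborhood of $\theta$ (its derivative at $\theta$ is $r(1-\theta)>0$), hence \cite[Theorem~1.4]{Du-Mat-10} applies when $N=1$, and \cite[Theorems~5 and 6]{Mur-Zho-17} when $N\geq 2$.

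For task (b), one just optimizes the constants in Assumption \ref{hyp:nonlinearite}. For the ignition $f$: on $[\theta,\delta^+]$ one has $f(u)=r(u-\theta)(1-u)\leq r(1-\theta)(u-\theta)$, so $r^+=r(1-\theta)$ is admissible (for any $\delta^+\in(\theta,1)$); on $(-\infty,\delta^-]$ the inequality $f(u)\geq r^-(u-\theta)$ is automatic for $u\leq\theta$ (both sides are $\leq 0$ with $f\equiv 0$) and reduces to $r(1-u)\geq r^-$ on $[\theta,\delta^-]$, so every $r^-<r(1-\theta)$ works upon choosing $\delta^-$ close enough to $\theta$. Feeding these into Corollary \ref{cor:sharp} and sending $r^-\uparrow r(1-\theta)$ yields $C^-=1/\sqrt{r(1-\theta)}$ and $C^+=2/\sqrt{r(1-\theta)}$. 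For the bistable $f$, note that $f(u)/(u-\theta)=ru(1-u)$ on $\{u\neq\theta\}$: on $[\theta,\delta^+]$ the supremum of $ru(1-u)$ tends to $r\theta(1-\theta)$ as $\delta^+\downarrow\theta$, so $r^+$ can be chosen arbitrarily close to $r\theta(1-\theta)$ from above; on $(0,\theta]$ the constraint $f(u)\geq r^-(u-\theta)$ flips (since $u-\theta<0$) to $ru(1-u)\leq r^-$, forcing $r^-\geq r\theta(1-\theta)$ because $u\mapsto u(1-u)$ is increasing on $(0,\theta]\subset(0,1/2)$; and on $[\theta,\delta^-]$ with $\delta^-\leq 1-\theta$ the infimum of $ru(1-u)$ equals $r\theta(1-\theta)$. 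Hence $r^-=r\theta(1-\theta)$ is exactly admissible, and Corollary \ref{cor:sharp} delivers both bounds.

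The only subtlety I anticipate is that in the ignition case $r^-$ only approaches $r(1-\theta)$ from below, rather than attaining it. This is harmless: Corollary \ref{cor:sharp} provides a uniform upper bound on $\limsup_{\ep\to 0} L_\ep^\star/\ln(1/\ep)$ for each admissible $r^-$, and taking the infimum over admissible constants after the limit yields the stated $C^+=2/\sqrt{r(1-\theta)}$.
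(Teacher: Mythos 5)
Your computation of the optimal constants $r^\pm$ in Assumption~\ref{hyp:nonlinearite}(i)--(ii) for each prototype, together with the limiting argument over admissible $r^\pm$, is correct and is actually more explicit than the paper, which simply invokes Corollary~\ref{cor:sharp} and leaves that bookkeeping to the reader. Your treatment of sharpness in the bistable case coincides with the paper's (both cite \cite{Pol-11}, using $f\in C^1$ and $f'(0)=-r\theta<0$), and your $N=1$ ignition case via \cite[Theorem~1.4]{Du-Mat-10} is fine since the hypothesis there (monotonicity of $f$ in a right neighborhood of $\theta$) is structural on $f$ and visibly holds.

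Where you and the paper genuinely diverge is the ignition case for $N\geq 2$. You cite \cite[Theorems~5, 6]{Mur-Zho-17} as applying directly to the family $\{\phi^\ep_L\}_{L>0}$. The paper instead makes a detour: fix $\overline L>L^{prop}_\ep$, run the solutions to a common time $\overline t>0$ chosen so that $\mathcal E\bigl(u^\ep_{\overline L}(\overline t,\cdot)\bigr)<0$, and apply the Muratov--Zhong sharpness result to the \emph{time-shifted} family $\{u^\ep_L(\overline t,\cdot)\}_{L>0}$, which is still strictly ordered in $L$ and tends to $0$ as $L\to 0$. This detour is not cosmetic. The Muratov--Zhong criteria are energy-based, and the indicator data $\phi^\ep_L=(\theta+\ep)\mathbf 1_{B_L}$ are not in $H^1(\R^N)$: their distributional gradient is a surface measure, so $\mathcal E(\phi^\ep_L)$ is not finite and the hypotheses of Theorems~5 and~6 cannot be checked on the raw family. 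Running to a positive time $\overline t$ smooths the data, gives finite (and eventually negative) energy for large $L$, and it is on this regularized family that the cited theorems apply. Your proposal should insert this regularization step (or an equivalent device) before invoking \cite{Mur-Zho-17}; as written, the citation skips verification of hypotheses that the paper takes care to fulfil. Note that the paper's energy/time-shift argument is also presented in a way that covers $N=1$ and the degenerate monostable case uniformly, which is a small advantage of its route over your dimension-split one.
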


\medskip

The organization of the paper is as follows. In Section \ref{s:toy-extinction} and Section \ref{s:toy-propagation} we inquire on extinction and non extinction phenomena in some related toy models. Next, in Section \ref{s:sharp}, we build on these preliminary results to prove Theorem \ref{th:main} and Corollary \ref{cor:prototype}, thus providing a quantitative estimate of the threshold phenomena. Section \ref{s:numeric} is devoted to numerical explorations that not only validate Theorem \ref{th:main} and Corollary \ref{cor:prototype} but also enable to make some conjectures on the best constants $C^\pm$. Last, in Section \ref{s:degenerate}, we present some  results on the threshold phenomena in the case of a degenerate monostable nonlinearity \lq\lq not enjoying the so-called hair trigger effect'', the typical example being $f(u)=ru^{p}(1-u)$, where $p>p_F:=1+\frac 2 N$. These results are not sharp with respect to $\ep\ll 1$ as in Theorem \ref{th:main} above. It 
however reflects the main order for the length of the sharp threshold for small $\ep$ and we believe our construction of adequate sub and supersolutions is instructive.

\section{Extinction in a toy model}\label{s:toy-extinction}

In this section, we consider the piecewise linear function
\begin{equation*}
g(u)=\left(u-\theta\right)_+.
\end{equation*}
Herein $\theta>0$ is a given and fixed parameter and the subscript $+$ is used to denote the positive part of a real number.
Let $N\geq 1$ be a given integer.
Consider the solution $w=w(t,x)$ of  the semi-linear problem
\begin{equation}\label{EQ}
w_t=\Delta w+g(w),\;t>0,\;x\in\R^N,
\end{equation} 
together with the initial data
\begin{equation}\label{EQ2}
w(0,x)=(\theta+\varepsilon) \mathbf 1_{B_L}(x),\;x\in\R^N,
\end{equation}
wherein $\ep>0$, $L>0$, and $B_L\subset \R^N$ denotes the ball of radius $L$ centred at the origin.

\begin{prop}[Extinction]\label{prop:extinction-toy}
Let $\delta>\theta$ be given. Consider the time 
\begin{equation}
\label{def:Tep}
T_\varepsilon:=\ln \frac{\delta-\theta}{\varepsilon}.
\end{equation}
Then, for any $0<\gamma<1$, there exists $\varepsilon_0>0$ small enough such that, for each $\ep\in (0,\ep_0)$ and for each $0<L<\gamma \ln\frac{1}{\ep}$, the  solution $w=w(t,x)$ of \eqref{EQ}---\eqref{EQ2} satisfies
\begin{equation}
\sup_{x\in\R^N}w\left(T_\varepsilon,x\right)\leq \theta,
\end{equation}
and is thus going to extinction at large times.
\end{prop}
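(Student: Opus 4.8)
The plan is to compare $w$ from above by the solution of the linear heat equation with the linearized reaction term $g'(u)\approx 1$ acting on the full half-line of amplitudes above $\theta$; more precisely, since $g(u)=(u-\theta)_+\le u-\theta$ for all $u$ (even below $\theta$, where the right side is negative), the shifted function $\overline w:=w-\theta$ satisfies $\overline w_t\le \Delta\overline w+\overline w$ as long as we are careful — but in fact the cleanest route is to note that $v:=w$ is a subsolution of the \emph{linear} problem $v_t=\Delta v+v-\theta\,\mathbf 1_{\{w>\theta\}}$, which is awkward, so instead I would use the elementary bound $g(w)\le (w)_+ - \theta$ only where $w>\theta$ and build an explicit supersolution directly. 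Concretely: let $V(t,x)$ solve the pure heat equation $V_t=\Delta V$ with $V(0,\cdot)=(\theta+\ep)\mathbf 1_{B_L}$, so that $V(t,x)=(\theta+\ep)\,\mathbb P(|x+\sqrt{2t}\,Z|<L)$ for a standard Gaussian $Z$ (up to the usual normalization of the heat kernel). Then $e^t V(t,x)$ is a supersolution of \eqref{EQ} because $\partial_t(e^tV)=e^tV+\Delta(e^tV)\ge \Delta(e^tV)+g(e^tV)$, using $g(s)\le s$ for $s\ge 0$. By the comparison principle, $w(t,x)\le e^t V(t,x)$ for all $t\ge 0$, $x\in\R^N$.

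The key step is then to estimate $\sup_x e^{T_\ep}V(T_\ep,x)$ and show it is $\le\theta$. We have
$$
\sup_{x} e^{T_\ep} V(T_\ep,x) = (\theta+\ep)\, e^{T_\ep}\, \sup_x \mathbb P\big(|x+\sqrt{2T_\ep}\,Z|<L\big) = (\theta+\ep)\, e^{T_\ep}\, \mathbb P\big(|\sqrt{2T_\ep}\,Z|<L\big),
$$
the supremum being attained at $x=0$ by unimodality/symmetry of the Gaussian. Now $e^{T_\ep}=\frac{\delta-\theta}{\ep}$ by \eqref{def:Tep}, so the quantity equals $\frac{(\theta+\ep)(\delta-\theta)}{\ep}\,\mathbb P(|Z|<L/\sqrt{2T_\ep})$. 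The point is that $T_\ep=\ln\frac{\delta-\theta}{\ep}\to+\infty$ as $\ep\to0$, while $L<\gamma\ln\frac1\ep$, so $L/\sqrt{2T_\ep}=O\big(\sqrt{\ln\frac1\ep}\big)$ and the Gaussian tail probability $\mathbb P(|Z|<L/\sqrt{2T_\ep})$ decays like $\exp\big(-cL^2/T_\ep\big)$ up to polynomial factors — this is where the $\ln(1/\ep)$ scaling enters. Quantitatively, for $N=1$, $\mathbb P(|Z|>s)\le e^{-s^2/2}$, hence $\mathbb P(|Z|<s)$ does \emph{not} go to zero, so I actually need the complementary observation: the gain comes not from the probability being small but from balancing $e^{T_\ep}$ against the spatial spreading. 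Let me reconsider: the correct comparison keeps the reaction only where $w>\theta$, so $w\le\theta+$ (heat evolution of a source that is active on a shrinking set); the honest estimate is $w(t,x)-\theta\le$ the solution of $z_t=\Delta z+z$, $z(0,\cdot)=\ep\mathbf 1_{B_L}$ plus a correction. Thus $z(t,x)=\ep\, e^t\, (G_t * \mathbf 1_{B_L})(x)$ with $G_t$ the heat kernel, and $\sup_x z(t,x)= \ep e^t (G_t*\mathbf 1_{B_L})(0)\le \ep e^t\, \min\{1,\ |B_L|\,(4\pi t)^{-N/2}\}$.

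At $t=T_\ep$ this gives $\sup_x(w(T_\ep,\cdot)-\theta)\le \ep\, e^{T_\ep}\,\omega_N L^N (4\pi T_\ep)^{-N/2} = (\delta-\theta)\,\omega_N L^N(4\pi T_\ep)^{-N/2}$ where $\omega_N=|B_1|$, once $|B_L|(4\pi T_\ep)^{-N/2}\le1$ which holds for $\ep$ small since $L^N=O\big((\ln\frac1\ep)^N\big)$ and $T_\ep^{N/2}=O\big((\ln\frac1\ep)^{N/2}\big)$ — wait, that ratio $L^N/T_\ep^{N/2}\sim (\ln\frac1\ep)^{N/2}\to\infty$, so the bound is \emph{not} small. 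This is the main obstacle: the naive linear comparison loses too much. The resolution, which I expect is what the authors do, is to \emph{not} carry the reaction term for all of $[0,T_\ep]$ but to observe that $g$ vanishes once $w\le\theta$, and to use instead a supersolution of the form $\theta + \ep\, e^{t}\,\psi(t,x)$ only on the (space-time) region where $w\ge\theta$, combined with the fact that the set $\{w(t,\cdot)>\theta\}$ shrinks: define $\rho(t)$ by requiring $w(t,x)\le\theta$ for $|x|\ge\rho(t)$, show $\rho$ satisfies an ODE inequality forcing $\rho(T_\ep)\le0$ provided $\rho(0)=L<\gamma\ln\frac1\ep$. Concretely I would take as supersolution $\bar w(t,x)=\theta+\ep\,e^{t}\,\eta\!\left(\frac{|x|-L}{\sqrt{?}}\right)$-type profile or, more robustly, iterate the estimate on short time intervals so that on each interval the active set is bounded and the heat kernel spreading beats the factor $e^{\Delta t}$; this is the step where the precise constant $\gamma<1$ (rather than any $\gamma$) is used, and it is the crux of the argument. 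I would structure the final writeup around an explicit one-parameter family of radial supersolutions $\bar w(t,x)$ with a Gaussian-type spatial profile of width growing like $\sqrt{t}$ and amplitude $\theta+\ep e^{t}$ at the origin, check the differential inequality $\bar w_t\ge\Delta\bar w+g(\bar w)$ directly using $g(\bar w)\le(\bar w-\theta)_+$, verify $\bar w(0,\cdot)\ge(\theta+\ep)\mathbf 1_{B_L}$, and finally evaluate $\sup_x\bar w(T_\ep,x)$, the constraint $L<\gamma\ln\frac1\ep$ being exactly what makes this supremum $\le\theta$ once $\ep<\ep_0(\gamma)$.
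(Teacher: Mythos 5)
Your writeup correctly diagnoses why the two naive comparisons fail (the crude bound $g(s)\le s$ loses the $-\theta$ offset entirely, and the additive bound $w\le\theta+z$ with $z_t=\Delta z+z$, $z(0,\cdot)=\ep\mathbf 1_{B_L}$ gives $\sup_x z(T_\ep,\cdot)\sim(\delta-\theta)\,\omega_N L^N(4\pi T_\ep)^{-N/2}\to\infty$), but it stops exactly at the crux: you never construct the supersolution, never verify a differential inequality for it, and never carry out the estimate in which the hypothesis $L<\gamma\ln\frac1\ep$ with $\gamma<1$ is actually used. A plan of the form \lq\lq I would take a Gaussian-type radial profile, or track the radius $\rho(t)$ of the active set via an ODE inequality, or iterate over short time intervals\rq\rq{} is not a proof; moreover your last concrete suggestion, $\bar w=\theta+\ep e^t\eta$ with $\eta\ge0$ a heat supersolution, is exactly the additive comparison you already showed to be insufficient, since it forces $\sup_x\bar w(T_\ep,\cdot)=\theta+(\delta-\theta)\sup\eta(T_\ep,\cdot)>\theta$.

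The missing idea in the paper is a \emph{multiplicative} ansatz: $W(t,x)=v(t,x)\varphi(t)$ where $v$ solves the pure heat equation from the same initial datum and $\varphi$ solves $\varphi'=\varphi-\theta/V(t)$, $\varphi(0)=1$, with $V(t)=\Vert v(t,\cdot)\Vert_{L^\infty}$. The supersolution inequality $v\varphi'\ge(v\varphi-\theta)_+$ holds up to the first time $T$ at which $V(T)\varphi(T)=\theta$, and the whole problem reduces to showing $T\le T_\ep$, i.e.
\begin{equation*}
1+\frac{\ep}{\theta}<\int_0^{T_\ep}\frac{e^{-s}}{A_L(s)}\,ds+\frac{e^{-T_\ep}}{A_L(T_\ep)},
\qquad A_L(s)=\frac{1}{(4\pi)^{N/2}}\int_{|x|<L/\sqrt{s}}e^{-|x|^2/4}\,dx .
\end{equation*}
This is where the real work lies: after an integration by parts and the substitution $z=s/L^2$, the surplus over $1$ is a Laplace-type integral whose phase $L^2z+\frac{1}{4z}$ is minimized at $z=\frac{1}{2L}$ with value $L$, yielding a lower bound of order $e^{-\gamma(1+k/2)\ln\frac1\ep}(\ln\frac1\ep)^{N/2}$, which beats the required threshold of order $\ep$ precisely because $\gamma<1$. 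None of this localization argument, nor any substitute for it, appears in your proposal, so the proof is incomplete at its decisive step.
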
  

\begin{proof}
Consider the solution $v=v(t,x)$ of the heat equation
\begin{equation*}
v_t=\Delta v,\quad t>0,\;x\in\R^{N},
\end{equation*}
starting from $w(0,\cdot)$. Denote by $\Gamma=\Gamma(t,x)$ the heat kernel on $\R^N$, given by 
\begin{equation*}
\Gamma(t,x)=\frac{1}{(4\pi t)^{\frac N 2}}\exp\left(-\frac{|x|^2}{4t}\right),\;t>0,\;x\in\R^N,
\end{equation*}
where $|\cdot|$ is used to denote the Euclidean norm in $\R^N$.
Then $v(t,\cdot)=(\theta+\ep)\Gamma(t,\cdot) * \mathbf 1 _{B_L}$,  so that for all $t>0$
\begin{equation*}
V(t):=\Vert  v(t,\cdot)\Vert _{L^\infty(\R^N)}=\frac{\theta+\ep}{(4\pi t)^{\frac N 2}}\int_{\vert x\vert <L} e^{-\frac{\vert x\vert ^2}{4t}}dx=\frac{\theta+\ep}{(4\pi)^{\frac N 2}}\int_{\vert x\vert <\frac{L}{\sqrt t}} e^{-\frac{\vert x\vert ^2}{4}}dx.
\end{equation*}
We now construct a supersolution to \eqref{EQ} in the form $W(t,x):=v(t,x)\varphi(t)$, with $\varphi(0)=1$ and $\varphi(t)>0$. From $W_t-\Delta W=v\varphi'$ and the expression of  $g$ this requires
$$
\varphi'(t)\geq \left(\varphi(t)-\frac{\theta}{v(t,x)}\right)_+, \quad \forall (t,x)\in (0,+\infty)\times \R^N,
$$
and thus
\begin{equation*}
\varphi'(t)\geq \left(\varphi(t)-\frac{\theta}{V(t)}\right)_+, \quad \forall t\in (0,+\infty).
\end{equation*}
We now choose $\varphi$ as the solution of the Cauchy problem
$$
\varphi'(t)=\varphi(t)-\frac{\theta}{V(t)}, \quad \varphi(0)=1,
$$
that is 
$$
\varphi(t)=e^{t}\left(1-\int _0^{t}e^{-s}\frac{\theta}{V(s)}ds\right).
$$
Observe that $V(0)\varphi(0)>\theta$ and denote by $T>0$ the first time where $V(T)\varphi(T)=\theta$ (obviously we let $T=+\infty$ if such a time does not exist). Then $\left(\varphi(t)-\frac{\theta}{V(t)}\right)_+= \varphi(t)-\frac{\theta}{V(t)}$ for all $t\in [0,T)$, and thus $W(t,x)=v(t,x)\varphi(t)$ is a supersolution to \eqref{EQ} on the time interval $(0,T)$. In particular, if $T<+\infty$, it follows from the comparison principle that $w(T,\cdot)\leq W(T,\cdot)\leq \theta$, and we are done provided $T\leq T_\ep$, a condition we aim at reaching below.

Now, observe that the condition $T<+\infty$ rewrites as there exists $t>0$ satisfying the equation
\begin{equation*}
F_L(t):=\theta\left(1-\frac{e^{-t}}{A_L(t)}\right)+\ep-\int _0^{t}e^{-s}\frac{\theta}{A_L(s)}ds=0,
\end{equation*}
wherein we have set
$$
A_L(t):=\frac{1}{(4\pi)^{\frac N 2}}\int_{\vert x\vert <\frac{L}{\sqrt t}} e^{-\frac{\vert x\vert ^2}{4}}dx=C_N\int _0 ^ {\frac{L}{2\sqrt t}}r^{N-1} e^{-r^2}dr,
$$
for some constant $C_N>0$.
Since $F_L(0)=\ep$ and $F_L'(t)=\theta e^{-t}\frac{A_L'(t)}{A_L^{2}(t)}<0$, the condition $T\leq T_\ep$ is equivalent to require $
F_L(T_\varepsilon)<0$, that also reads as
\begin{equation}\label{condition}
1+\frac{\ep}{\theta}< \int _0^{T_\varepsilon}\frac{e^{-s}}{A_L(s)}ds+\frac{e^{-T_\varepsilon}}{A_L(T_\varepsilon)}.
\end{equation}

Note that the right-hand side of the above expression is decreasing with respect to $L$.
We now select
\begin{equation}
\label{select-Lep}
L_\varepsilon=\gamma \sqrt{T_\varepsilon \ln\frac{1}{\varepsilon}},
\end{equation}
for some constant $\gamma>0$ to be chosen later, and aim at proving that \eqref{condition}, with $L=L_\ep$, is satisfied for $\ep>0$ small enough. To do so, set
$$
G_\varepsilon:=\int _0^{T_\varepsilon}\frac{e^{-s}}{A_{L_\ep}(s)}ds.
$$ 
Integrating by parts yields
\begin{equation*}
G_\ep=\int _0^{T_\ep}\frac{e^{-s}}{A_{L_\ep}(s)}ds=1-\frac{e^{-T_\ep}}{A_{L_\ep}(T_\ep)}+\frac{C_N}{2^{N+1}}L_\ep^N\int_0^{T_\ep} \frac{e^{-s}}{A_{L_\ep}^{2}(s)} \frac 1{s^{\frac{N+2}{2}}}e^{-\frac{L_\ep^2}{4s}}ds.
\end{equation*}
Next the change of variable $z=s/L_\ep^2$ yields
\begin{equation*}
G_\ep=1-\frac{e^{-T_\ep}}{A_{L_\ep}(T_\ep)}+\frac{C_N}{2^{N+1}}\int_0^{\frac{1}{\gamma^2\ln\frac{1}{\ep}}} \frac{e^{-L_\ep^2z}}{A_1^{2}(z)}\frac 1{z^{\frac{N+2}{2}}}
e^{-\frac{1}{4z}}dz.
\end{equation*}
Hence \eqref{condition} rewrites
\begin{equation}\label{condition2}
\frac{2^{N+1}}{C_N\theta} \ep <H_\ep:=\int_0^{\frac{1}{\gamma^2\ln\frac{1}{\ep}}} \frac{e^{-L_\ep^2z-\frac{1}{4z}}}{A_1^{2}(z)}\frac 1{z^{\frac{N+2}{2}}}dz.
\end{equation}
Observe that  the function $z\in(0,+\infty) \mapsto L_\ep^2z+\frac{1}{4z}$ is decreasing then increasing and reaches its minimal value $L_\ep$  at $z=\frac{1}{2L_\ep}$.
Next one has, as $\ep \to 0$,   
\begin{equation*}
2L_\ep =2\gamma \sqrt{\ln (\delta-\theta)\ln\frac{1}{\ep}+\ln^2\frac{1}{\varepsilon}}=2\gamma\ln\frac{1}{\ep}\left(1+\mathcal O\left(\ln^{-1}\frac{1}{\ep}\right)\right).
\end{equation*}
We now restrict to  $0<\gamma<2$ and select $k>0$ small enough so that $\gamma<\frac{2}{1+k}$. It follows from the above that, for $0<\ep\ll 1$, 
$$
\left(\frac{1}{2L_\ep},\frac{1+k}{2L_\ep}\right)\subset \left(0,\frac{1}{\gamma ^2\ln \frac 1\ep}\right).
$$
Since
$$
L_\ep^2z+\frac{1}{4z}\leq \frac{L_\ep}{2}(2+k),\;\;\forall z\in \left(\frac{1}{2L_\ep},\frac{1+k}{2L_\ep}\right),
$$
it follows that
\begin{eqnarray*}
H_\ep &\geq & \int_{\frac{1}{2L_\ep}}^{\frac{1+k}{2L_\ep}}\frac{e^{-\frac{L_\ep}2(2+k)}}{z^{\frac{N+2}{2}}}dz  \\
&\geq & \frac{e^{-\frac{L_\ep}2(2+k)}}{\left(\frac{1+k}{2L_\ep}\right)^{\frac{N+2}{2}}}\frac{k}{2L_\ep}=C_{N,k} e^{-\frac{L_\ep}2(2+k)} L_\ep ^{\frac N 2}\sim C_{N,\gamma,k}e^{-\gamma(1+\frac k 2)\ln \frac 1\ep }\left(\ln \frac 1 \ep\right)^{\frac N 2},
\end{eqnarray*}
as $\ep \to 0$. In the above estimate, $C_{N,k}$ and $C_{N,\gamma,k}$ denote positive constants independent of $\ep>0$ small enough, depending on $(N,k)$ and $(N,\gamma,k)$ respectively.
As a result, by restricting further to $0<\gamma<1$ and $k>0$ small enough so that $\gamma(1+\frac k 2)<1$, we get that condition \eqref{condition2} is satisfied for $0<\ep\ll1$. This completes the proof of the result.
\end{proof}

\section{Non extinction in a toy model}\label{s:toy-propagation}

In this section, we fix $\theta\in (0,1)$ and we consider the solution $w=w(t,x)$ of the problem
\begin{equation}\label{EQ3}
w_t=\Delta w+w-\theta,\;t>0,\;x\in\R^N,
\end{equation} 
together with the initial data
\begin{equation}\label{EQ4}
w(0,x)=(\theta+\varepsilon) \mathbf 1_{B_L}(x),\;x\in\R^N.
\end{equation}

\begin{prop}[Non extinction]\label{prop:prop-toy}
Let $\theta<\alpha '<\alpha <1$ be given.  Consider the time 
\begin{equation}
\label{def:Tep2}
T_\varepsilon:=\ln \frac{\alpha-\theta}{\varepsilon}.
\end{equation}
Let $\gamma >2$ be given. Then for any $0<k<1-\frac 2 \gamma$, there exists $\varepsilon_0>0$ small enough such that, for each $\ep\in (0,\ep_0)$ and for each $L>\gamma \ln\frac{1}{\ep}$, the  solution $w=w(t,x)$ of \eqref{EQ3}---\eqref{EQ4} satisfies
\begin{equation*}
\min_{|x|\leq kL_\varepsilon} w(T_\varepsilon,x)\geq \alpha'.
\end{equation*}
\end{prop}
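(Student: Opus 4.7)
The plan is to exploit the fact that \eqref{EQ3} is linear and admits an explicit representation via the heat kernel. Setting $z:=w-\theta$, the equation becomes $z_t=\Delta z+z$ with initial datum $z(0,\cdot)=(\theta+\ep)\mathbf 1_{B_L}-\theta$; using that $e^{t}(\Gamma(t,\cdot)*z_0)$ solves this linear equation and that the heat kernel has total mass one, one obtains the explicit representation
\begin{equation*}
w(t,x)=\theta + e^t\left[(\theta+\ep)\,P(t,x,L)-\theta\right],\qquad P(t,x,L):=\int_{B_L}\Gamma(t,x-y)\,dy.
\end{equation*}
Evaluating at $t=T_\ep$, where $e^{T_\ep}=(\alpha-\theta)/\ep$, a short algebraic manipulation shows that the desired inequality $w(T_\ep,x)\ge\alpha'$ is equivalent to
\begin{equation*}
1-P(T_\ep,x,L)\le \frac{(\alpha-\alpha')\ep}{(\theta+\ep)(\alpha-\theta)},
\end{equation*}
a quantity of order $\ep$ as $\ep\to 0$. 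Everything thus reduces to proving that, uniformly for $|x|\le kL$, the tail $1-P(T_\ep,x,L)$ is at most $c\,\ep$ for some positive constant $c$.

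The second step is a uniform Gaussian tail bound. For $|x|\le kL$, any $y$ with $|y|\ge L$ satisfies $|y-x|\ge (1-k)L$, whence, after the change of variable $r=|z|/(2\sqrt t)$,
\begin{equation*}
1-P(t,x,L)=\int_{|y|\ge L}\Gamma(t,x-y)\,dy \le \int_{|z|\ge (1-k)L}\Gamma(t,z)\,dz = C_N\int_{(1-k)L/(2\sqrt t)}^{+\infty} r^{N-1}e^{-r^2}\,dr,
\end{equation*}
and a standard estimate of the Gaussian tail then yields, as soon as $L/\sqrt t$ is large,
\begin{equation*}
1-P(t,x,L)\le C_N'\left(\frac{L}{\sqrt t}\right)^{N-2}\exp\!\left(-\frac{(1-k)^2 L^2}{4t}\right).
\end{equation*}

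The final step is the $\ep$-asymptotic calibration. Substituting $t=T_\ep=\ln\tfrac 1 \ep+O(1)$ and $L>\gamma\ln\tfrac 1 \ep$ into the exponent yields
\begin{equation*}
\frac{(1-k)^2L^2}{4T_\ep}\ge \frac{(1-k)^2\gamma^2}{4}\ln\frac 1\ep\,(1+o(1)),
\end{equation*}
so $1-P(T_\ep,x,L)\le \ep^{(1-k)^2\gamma^2/4\,(1+o(1))}$ up to polynomial factors in $\ln\tfrac 1\ep$. The hypothesis $k<1-\tfrac 2 \gamma$ is precisely equivalent to $(1-k)^2\gamma^2/4>1$, which guarantees that the above bound is $o(\ep)$ and therefore below the threshold derived in the first step, for all sufficiently small $\ep>0$ and uniformly in $|x|\le kL$. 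This concludes the argument. The only delicate point is this final calibration: the two competing factors $e^{T_\ep}\sim\ep^{-1}$ coming from the linear growth and the Gaussian tail $\exp(-(1-k)^2L^2/(4T_\ep))$ must be balanced, and the condition $(1-k)\gamma>2$ is exactly the sharp threshold for the latter to dominate the former.
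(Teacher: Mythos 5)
Your proof is correct and follows essentially the same route as the paper's: linearize via $w=\theta+e^t(v-\theta)$ with $v$ the heat solution, reduce the desired inequality to a Gaussian tail bound for $|x|\le kL$ using $|x-y|\ge(1-k)|y|\ge(1-k)L$, and calibrate with the tail asymptotic $\int_X^\infty r^{N-1}e^{-r^2}dr\sim\tfrac12 X^{N-2}e^{-X^2}$, where the hypothesis $k<1-\tfrac2\gamma$, i.e. $(1-k)\gamma>2$, makes the tail $o(\ep)$.
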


\begin{proof} Notice that the function $w=w(t,x)$ is explicitly given by
\begin{equation}
w(t,x)=\theta+e^{t}(v(t,x)-\theta),
\end{equation}
where $v=v(t,x)$ is the solution of the heat equation
\begin{equation*}
v_t=\Delta v,\;t>0,\;x\in\R^N,
\end{equation*}
with the initial datum $v(0,x)=w(0,x)$. Hence
\begin{eqnarray*}
w(T_\varepsilon,x)&=& \theta+e^{T_\varepsilon}\left[\varepsilon-\frac{\theta+\varepsilon}{(4\pi T_\ep)^{N/2}}\int_{|y|\geq L} e^{-\frac{|x-y|^2}{4T_\ep}}dy\right]\\
&=& \theta+(\alpha-\theta)\left[1-\frac{\theta+\varepsilon}{\varepsilon(4\pi T_\varepsilon)^{N/2}}\int_{|y|\geq L} e^{-\frac{|x-y|^2}{4T_\varepsilon}}dy\right],
\end{eqnarray*}
from the definition of $T_\ep$ in \eqref{def:Tep2}. As result, for all $|x|\leq kL$ one has, since $|y|\geq L$ ensures that $|x|\leq kL\leq k|y|$ and $|x-y|\geq (1-k)|y|$, 
\begin{eqnarray}
w(T_\varepsilon,x)&\geq & \theta+(\alpha-\theta)\left[1-\frac{\theta+\varepsilon}{\varepsilon(4\pi T_\varepsilon)^{N/2}}\int_{|y|\geq L} e^{-\frac{(1-k)^2|y|^2}{4T_\varepsilon}}dy\right]\nonumber\\
&\geq & \theta+(\alpha-\theta)\left[1-\frac{\theta+\varepsilon}{\varepsilon(1-k)^N\pi ^{N/2}}\int_{|z|\geq \frac{(1-k)L}{\sqrt {4T_\varepsilon}} }e^{-|z|^2}dz\right].\label{truc-bidule}
\end{eqnarray}

We now select
\begin{equation}
\label{select-Lep2}
L_\varepsilon=\gamma \sqrt{T_\varepsilon \ln\frac{1}{\varepsilon}},
\end{equation}
for some constant $\gamma>2$. In view of \eqref{truc-bidule}, it is enough to conclude the proof to reach
\begin{equation*}
\theta+(\alpha-\theta)\left[1-\frac{\theta+\varepsilon}{\varepsilon(1-k)^N\pi ^{N/2}}\int_{|z|\geq \frac{(1-k)L_\varepsilon}{\sqrt {4T_\varepsilon}} }e^{-|z|^2}dz\right]\geq \alpha',
\end{equation*}
for $0<\ep \ll 1$,  that is
\begin{equation}\label{machin}
I_\ep:=\int_{|z|\geq \frac{(1-k)}{2}\gamma\sqrt{\ln \frac 1 \ep} }e^{-|z|^2}dz\leq \varepsilon \frac{\alpha-\alpha'}{\alpha-\theta}\frac{(1-k)^N\pi ^{N/2}}{\theta+\varepsilon}.
\end{equation}
On the other hand, by denoting by $A_N>0$ some constant depending on $N$, one has for $X>0$,
\begin{equation*}
\int_{|z|\geq X} e^{-|z|^2}dz=A_N \int_X^\infty r^{N-1}e^{-r^2}dr\sim \frac{A_N}{2} X^{N-2}e^{-X^2}\text{ as $X\to\infty$},  
\end{equation*}
so that, as $\varepsilon\to 0$,
\begin{equation*}
I_\varepsilon \sim \frac{A_N}{2}\left(\frac{(1-k)}{2}\gamma\sqrt{\ln\frac{1}{\varepsilon}}\right)^{N-2} \exp\left(-\frac{(1-k)^2}{4}\gamma^2\ln \frac 1 \ep \right).
\end{equation*}
Since $\frac{(1-k)\gamma}{2}>1$, the above implies $I_\ep=o(\ep)$ as $\ep \to 0$, which validates \eqref{machin}, and thus concludes the proof.
\end{proof}

\section{Quantitative estimates of the (sharp) threshold phenomena}\label{s:sharp}

In this section, we complete the proof of Theorem \ref{th:main} and Corollary \ref{cor:prototype}.

\begin{proof}[Proof of  Theorem \ref{th:main}]
Let Assumption \ref{hyp:nonlinearite} hold. Let us select $\theta<\delta<\min(\delta ^-,\delta ^+)$, where $\delta^-$, $\delta^+$ are provided by Assumption \ref{hyp:nonlinearite} $(i)$, $(ii)$ respectively. 

 By an immediate time-space rescaling argument we have that the threshold values $L_\ep^{ext}=L_\ep^{ext}(f)$ and $L_\ep^{prop}=L_\ep^{prop}(f)$ under investigation are given by
$$
L_\ep^{ext}=\frac 1{\sqrt{r^{+}} }L_\ep^{ext}(f/r^{+}), \quad L_\ep^{prop}=\frac 1{\sqrt{r^{-}}}L_\ep^{prop}(f/r^{-}),
$$
where $L_\ep^{ext/prop}(f/r)$ are the threshold values associated with the nonlinearity $f/r$. 

Let us first enquire on $L_\ep^{ext}(f/r^{+})$. Let us denote by $w_L^{\ep}=w_L^{\ep}(t,x)$ the solution to
$$
w_t=\Delta w +\frac{1}{r^{+}}f(w),
$$
starting from $\phi_L^{\ep}(x)=(\theta+\ep)\mathbf 1_{B_L}(x)$. By combining Assumption \ref{hyp:nonlinearite} $(i)$  and comparison with the ordinary differential equation (use the supersolution $t\mapsto \theta+\ep e^t$), we immediately have the upper bound
$$
w(t,x)\leq\delta,\; \forall (t,x) \in [0,T_\ep]\times \R^N,
$$
where the time $T_\ep$ was defined in \eqref{def:Tep}. This in turn implies that $w$ is a subsolution to \eqref{EQ} on $(0,T_\ep)\times \R^{N}$. As a result, the extinction result Proposition \ref{prop:extinction-toy} applies: for any $0<\gamma<1$, we have, for $0<\ep\ll 1$, $L_\ep^{ext}(f/r^{+})\geq \gamma \ln \frac 1 \ep$ and thus
$$
L_\ep^{ext}=L_\ep^{ext}(f)\geq \frac{\gamma}{\sqrt{r^{+}}}\ln \frac 1 \ep.
$$
This rewrites as
\begin{equation*}
\frac{1}{\sqrt {r^+}}\leq \liminf_{\ep\to 0}\frac{L_\ep^{ext}}{\ln \frac{1}{\ep}}.
\end{equation*}

Let us now enquire on $L_\ep^{prop}(f/r^{-})$. Let us denote by $w_L^{\ep}=w_L^{\ep}(t,x)$ the solution to
$$
w_t=\Delta w +\frac{1}{r^{-}}f(w),
$$
starting from $\phi_L^{\ep}(x)=(\theta+\ep)\mathbf 1_{B_L}(x)$. 
Set $g(w)=\frac{1}{r^{-}}f(w)$ and note that due to Assumption \ref{hyp:nonlinearite} $(ii)$, one has
$$
g(w)\geq w-\theta,\;\forall w\in (-\infty,\delta].
$$
Let $\eta\in (0,\delta-\theta)$ be small enough and define a Locally Lipschitz continuous function $\widehat{g}:\R\to \R$ such that
\begin{equation*}
\widehat{g}(w)=\begin{cases} g(w) &\text{ for }w\in ]-\infty,\theta]\cap [\delta,\infty),\\
w-\theta &\text{ for }w\in [\theta,\theta+\eta],
\end{cases}
\end{equation*}
$\widehat g(w)\leq g(w)$ for $w\in [\theta+\eta,\delta]$ and $\int_0^1 \widehat g(s)ds>0$. Denote $\widehat w=\widehat w(t,x)$ the solution of the problem
$$
\widehat w_t=\Delta \widehat w +\widehat{g}(\widehat{w}),\;\;\widehat w(0,x)=w(0,x),
$$
so that $\widehat w(t,x)\leq w(t,x)$.
Fix $\eta'\in (0,\eta)$ and consider the time $T_\ep=\ln \frac{\eta}{\ep}$. Since 
$$
\widehat g(w)\geq w-\theta,\;\forall w\in (-\infty,\theta+\eta],
$$
and for all $\ep\leq \eta$ one has $\widehat w(t,x)\leq \theta+\eta$ on $[0,T_\ep]\times\R^N$, 
using Proposition \ref{prop:prop-toy}, one obtains that, for any given $\gamma>2$ and $k>1-\frac{2}{\gamma}$, there exists $\ep_0>0$ such that, for all $\ep\in (0,\ep_0)$ and $L>L_\ep:=\gamma\ln \frac{1}{\ep}$, one has
$$
\widehat w(T_\ep,x)\geq (\theta+\eta'){\bf 1}_{B_{kL_\ep}}.
$$
Define by $\widehat L^{prop}_{\eta'}$ the propagating threshold associated to the equation with the nonlinearity $\widehat g$.
Hence, for all $\ep$ small enough such that $kL_\ep>\widehat L^{prop}_{\eta'}$, one has
$\widehat w(t,x)\to 1$ as $t\to+\infty$ locally uniformly in space and therefore $w(t,x)\to 1$ as $t\to+\infty$ locally uniformly in space. As a consequence we obtain
\begin{equation*}
L^{prop}_\ep(f/r^-) \leq L_\ep\text{ for all $\ep<\!\!<1$}.
\end{equation*}
Hence we get
\begin{equation*}
\limsup_{\ep\to 0}\frac{L_\ep^{prop}(f/r^-)}{\ln \frac{1}{\ep}}\leq \gamma,\;\forall \gamma>2.
\end{equation*}
This rewrites as
\begin{equation*}
\limsup_{\ep\to 0}\frac{L_\ep^{prop}}{\ln \frac{1}{\ep}}\leq \frac{2}{\sqrt {r^-}},
\end{equation*}
and concludes the proof of Theorem \ref{th:main}.
\end{proof}

\begin{proof}[Proof of Corollary \ref{cor:prototype}] In view of Corollary \ref{cor:sharp}, it is sufficient to prove that the threshold is sharp. In the bistable case, the threshold is known to be sharp when we further assume that $f$ is of the class $C^1$ and $f'(0)<0$, see \cite{Pol-11}, which is the case of the usual cubic nonlinearity \eqref{bistable}. On the other hand, the ignition case \eqref{ignition} can be handled using the results of \cite{Mur-Zho-17}. Indeed fix $\ep\in (0,1-\theta)$ and fix $\overline{L}>L_\ep^{prop}$. We consider the family of solutions $u^\ep_L=u_L^\ep(t,x)$ of \eqref{eq} starting from \eqref{initial-data}. Recall that it is non decreasing with respect to $L$.
Then, since $u^\ep_{\overline L}(t,\cdot)\to 1$ as $t\to+\infty$, locally uniformly in $\R^N$, there exists $\overline{t}>0$ such that the energy, see \eqref{energy}, satisfies
$$
\mathcal E\left(u^\ep_{\overline L}(\overline{t},\cdot)\right)<0.
$$
Finally, since $u^\ep_L(\overline{t},\cdot)\to 0$ as $L\to 0$, the existence of the sharp threshold $L_\ep^\ep$ directly follows from the application of the results of Muratov and Zhong \cite{Mur-Zho-17} for ignition nonlinearities and the family of initial data $\{u^\ep_L(\overline t,\cdot)\}_{L>0}$. Notice that this argument also applies in the bistable case and in some monostable situations (see Corollary \ref{cor:sharp-mono}). 
\end{proof}

\section{Numerical explorations}\label{s:numeric}

We explored sharp threshold of propagation numerically for the one dimensional case, that is $N=1$ throughout this section. We first report on numerical explorations of the validity range of our predictions from Corollary \ref{cor:sharp}  for nonlinearities satisfying Assumption~\ref{hyp:nonlinearite} and more precisely Corollary \ref{cor:prototype} for \eqref{ignition} and \eqref{bistable}. We also show some evidence for the existence of a universal constant $C(\theta)$ such that 
\begin{equation*}
\underset{\ep\rightarrow0^+}{\lim}~\frac{L_\ep^\star}{\ln\frac{1}{\ep}}=C(\theta),
\end{equation*}
for both ignition and bistable nonlinearities. Finally, we investigate other dependencies.

\subsection{Numerical validation of Corollary \ref{cor:prototype}}

We used second order finite differences in space and a splitting method in time where the diffusion part of the equation is evaluated with a Crank-Nicolson scheme while the nonlinear term is either evaluated exactly (for the ignition case) or with a Runge-Kuttta method of order two (for the bistable nonlinearity). Time step and space discretization were set respectively to $\delta t = 0.02$ and $\delta x = 0.02$. We used direct numerical simulations to evaluate the critical threshold $L_\ep^\star$ using the following strategy. For fixed nonlinearity $f$ and $\ep>0$, we varied the size $L>0$ of the support of the initial condition:
\begin{equation*}
\phi_L^\ep=(\theta+\ep)\mathbf{1}_{[-L,L]},
\end{equation*}
and ran our numerical scheme from $t=0$ to some final time $t=T$. For each $L>0$, we evaluated the maximum of the numerically computed solution at the last time step. We then discriminated the precise value of $L$ for which there is a transition from extinction (maximum being close to zero) to propagation (maximum being close to one). As we expect the sharp threshold $L_\ep^\star$ to be of the order $\ln\frac{1}{\ep}$ as $\ep\rightarrow 0^+$ and the transition to occur for times of the order $\ln\frac{1}{\ep}$, we took a spatial domain of computation $[-100,100]$ and a final time of computation $T=100$. This allowed us to cover a range for $\ep$ from $10^{-3}$ to $10^{-1}$.

\paragraph{Ignition nonlinearity.} We present our results for the ignition nonlinearity function
\begin{equation*}
f(u)=(u-\theta)(1-u), \quad u\in[0,1],
\end{equation*}
for several values of $\theta\in(0,1)$. Numerically computed values of $L_\ep^\star$ are plotted in Figure~\ref{fig:lnEps} (left) as a function of $\ln\frac{1}{\ep}$ together with a corresponding linear fit for several representative values of $\theta$. Measured slopes are presented in Table~\ref{tableIgn} for a larger range of $\theta$ ranging from $0.1$ to $0.6$. Our numerical results seem to indicate that 
\begin{equation*}
\underset{\ep\rightarrow0^+}{\lim}~\frac{L_\ep^\star}{\ln\frac{1}{\ep}}=\frac{1}{\sqrt{1-\theta}},
\end{equation*}
as it can be inferred from the last line in Table~\ref{tableIgn}.

\begin{table}[!h] 
\begin{center}
\begin{tabular}{|c||c|c|c|c|c|c|c|}
\hline  & $\theta=0.1$  & $\theta=0.2$ & $\theta=0.3$ & $\theta = 0.4 $ & $\theta = 0.5$ & $\theta=0.6$  \\ \hline
$s_m$ & 1.11&  $1.20$  & $1.24$ & $1.32$ & 1.43 & 1.57  \\\hline
$  \sqrt{1-\theta}s_m$ & 1.05 &  $1.07$ &  1.04   & $1.02$ & 1.01 & 0.99  \\\hline
\end{tabular}
\end{center}
\caption{{\bf Ignition nonlinearity.} Measured slopes $s_{m}$ of the linear fit of $L_\ep^\star$ as a function of $\ln\frac{1}{\ep}$ together with their rescaled values $\sqrt{1-\theta}s_m$. See Figure~\ref{fig:lnEps} (left). Note that $\sqrt{1-\theta}s_m\simeq 1$ appears to be independent of $\theta$.}
\label{tableIgn} 
\end{table}

 \begin{figure}[t!]
\centering 
\includegraphics[width=0.49\textwidth]{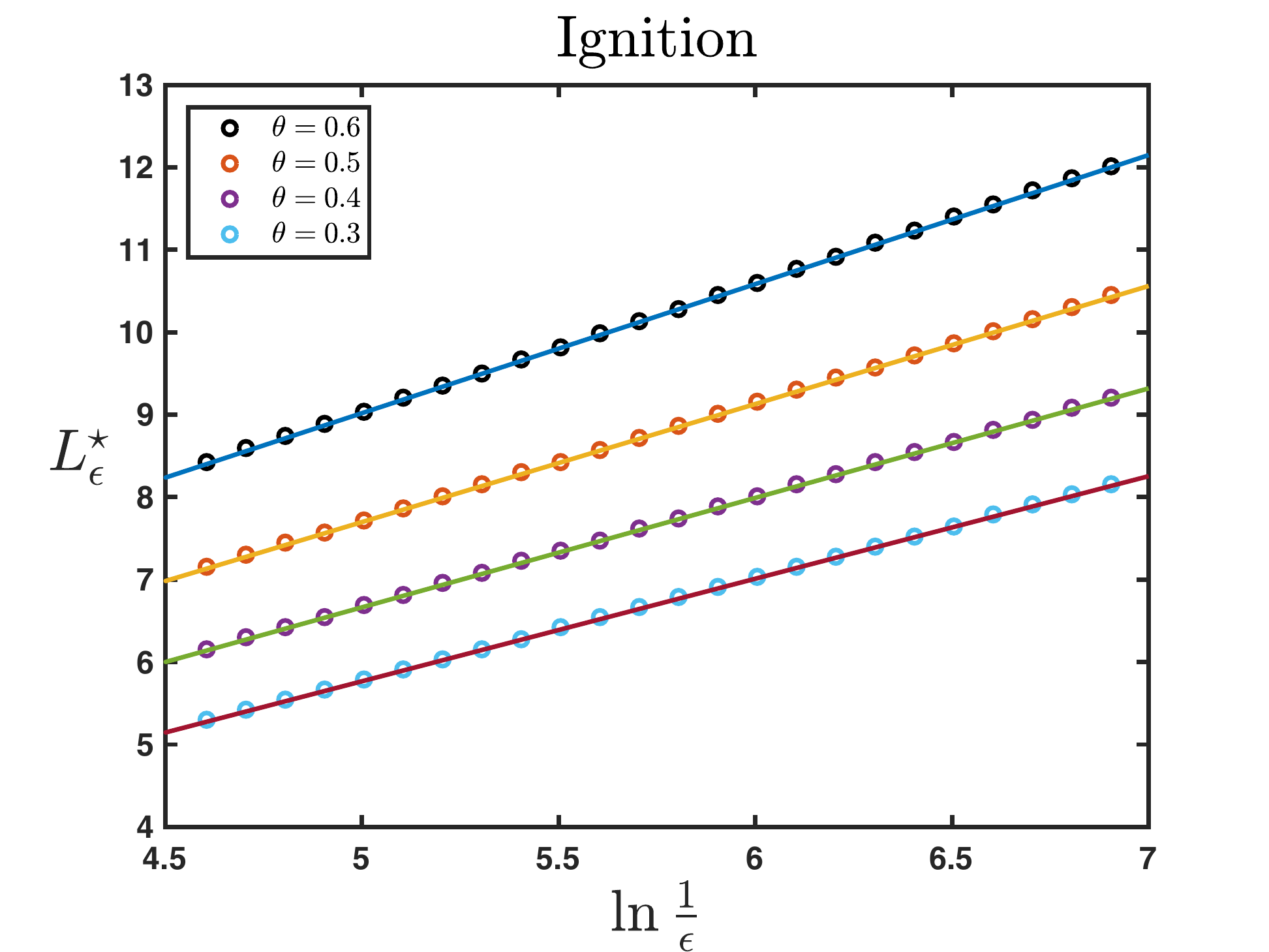}  
 \includegraphics[width=0.49\textwidth]{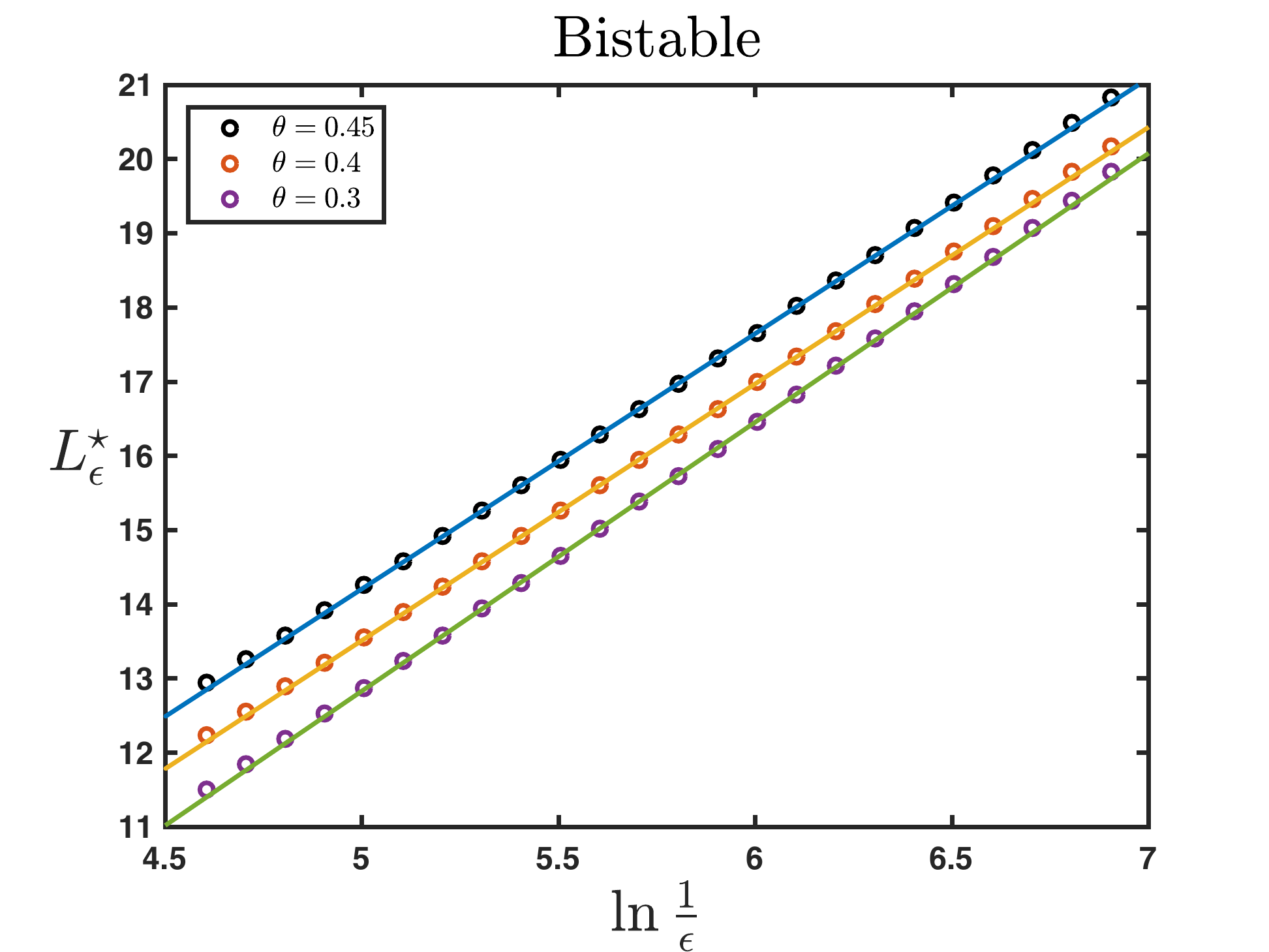}
\caption{Numerically computed values of $L_\ep^\star$ as a function of $\ln\frac{1}{\ep}$ for several values of $\theta$ for the ignition nonlinearity (left) and bistable nonlinearity (right). We also superimposed in plane line a linear fit whose slopes $s_m$ are reported in Table~\ref{tableIgn} for the ignition nonlinearity (resp. in Table~\ref{tableBis} for the bistable nonlinearity).}
\label{fig:lnEps}
\end{figure}

\paragraph{Bistable nonlinearity.} We present our results for the bistable nonlinearity function
\begin{equation*}
f(u)=u(u-\theta)(1-u), \quad u\in[0,1],
\end{equation*}
for several values of $\theta\in(0,1/2)$. Numerically computed values of $L_\ep^\star$ are plotted in Figure~\ref{fig:lnEps} (right) as a function of $\ln\frac{1}{\ep}$ together with a corresponding linear fit for several representative values of $\theta$. Measured slopes are presented in Table~\ref{tableBis} for a larger range of $\theta$ ranging from $0.1$ to $0.4$. As predicted by our main Theorem~\ref{th:main}, we observe that $\sqrt{\theta(1-\theta)}s_m\in [1,2]$. Contrary to the ignition case, it seems that $\sqrt{\theta(1-\theta)}s_m$ does depend on $\theta$ and, moreover, in a nonlinear fashion. Our measured values did not allow us to conjecture an elaborated guess for this dependence. We report in Figure~\ref{fig:Ctheta} the numerically computed values of $\sqrt{\theta(1-\theta)}s_m$ for several values of $\theta\in(0,1/2)$.

\begin{table}[!h] 
\begin{center}
\begin{tabular}{|c||c|c|c|c|c|c|c|}
\hline & $\theta = 0.1 $ & $\theta = 0.15$ & $\theta = 0.2 $ & $\theta = 0.25$ & $\theta = 0.3 $ & $\theta = 0.35 $ & $\theta = 0.4$ \\ \hline
$s_m$  & $5.12$ & $4.43$ & $4.04$ & $3.78$ & $3.62$ & $3.52$ &  $3.45$   \\\hline
$  \sqrt{\theta(1-\theta)}s_m$  & $1.53$ & $1.58$ & $1.61$ & $1.63$  & $1.66$  & $1.67$ & $1.69$  \\\hline
\end{tabular}
\end{center}
\caption{{\bf Bistable nonlinearity.} Measured slopes $s_{m}$ of the linear fit of $L_\ep^\star$ as a function of $\ln\frac{1}{\ep}$  together with their rescaled values $\sqrt{\theta(1-\theta)}s_m$.  See Figures \ref{fig:lnEps} (right) and \ref{fig:Ctheta}. Note that $\sqrt{\theta(1-\theta)}s_m\in [1,2]$ as predicted by our main Theorem~\ref{th:main}.}
\label{tableBis} 
\end{table}

\subsection{Other dependencies -- Bistable nonlinearity}
\begin{figure}[!t]
\centering 
\includegraphics[width=0.5\textwidth]{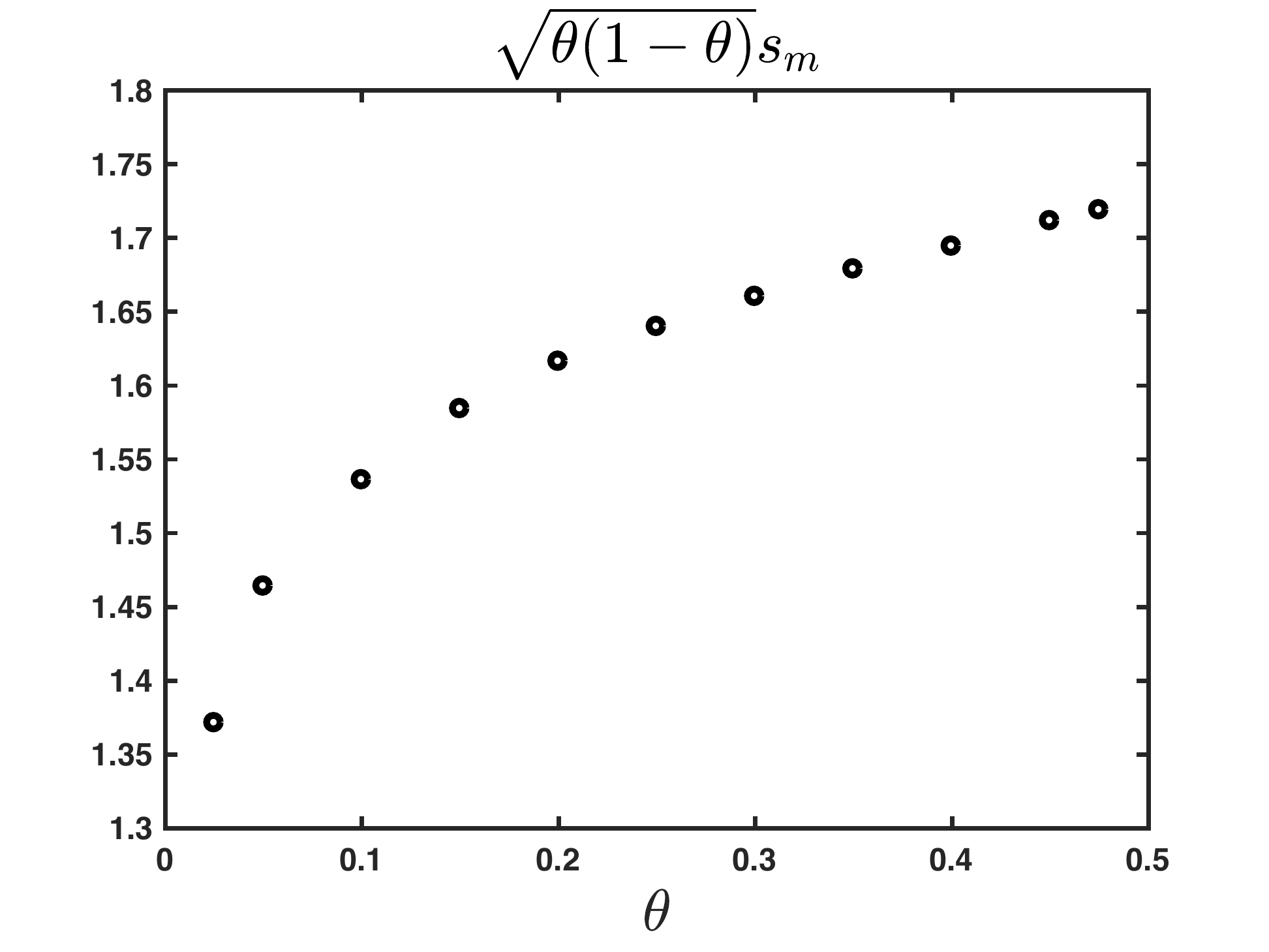}  
\caption{Numerically computed values of $\sqrt{\theta(1-\theta)}s_m$ for several values of $\theta\in(0,1/2)$ as reported in Table~\ref{tableBis} for the bistable nonlinearity \eqref{bistable}.}
\label{fig:Ctheta}
\end{figure}

Here, we report on some other dependencies of the critical threshold $L^\star$ in the case of the bistable nonlinearity 
\begin{equation*}
f(u)=u(u-\theta)(1-u).
\end{equation*}
We have considered the following family of compactly supported initial conditions:
\begin{equation*}
\phi_L=\mathbf{1}_{[-L,L]}, \quad  L>0,
\end{equation*}
such that the amplitude of the initial condition is fixed to the stable steady state $1$. For each $\theta\in(0,1/2)$, we numerically computed the corresponding critical threshold $L^\star$ by varying the size $L>0$ of the support. We report in Figure~\ref{fig:BisThetaVar} our numerical findings. We recover that in the limit $\theta\rightarrow0^+$, the critical threshold $L^\star \rightarrow 0$. Indeed, as  $\theta\rightarrow0^+$, the nonlinearity approaches $f(u)=u^2(1-u)$ for which the hair trigger effect is known \cite{Aro-Wei-78}. Our numerics suggest that the dependence of $L^\star$ near $\theta=0$ is linear, and we get
\begin{equation*}
\underset{\theta\rightarrow0^+}{\lim}~\frac{L^\star}{\theta} \approx 4.85.
\end{equation*}
On the other hand, when $\theta\rightarrow \frac{1}{2}^-$, we obtain $L^\star\rightarrow+\infty$. In the limit, we have $\int_0^1f(u)du=0$, and there does not exist any ground state (homoclinic solution to the steady state $0$) to equation \eqref{eq}. Instead, there is a one-parameter family of stationary monotone interfaces between $u=0$ and $u=1$. Our numerics suggest that the dependence of $L^\star$ near $\theta=\frac{1}{2}$ is logarithmic, and we get
\begin{equation*}
\underset{\theta\rightarrow\frac{1}{2}^-}{\lim}~\frac{L^\star}{\ln \frac{1}{1-2\theta}} \approx 0.99.
\end{equation*}

\begin{figure}[t!]
\centering 
\includegraphics[width=0.5\textwidth]{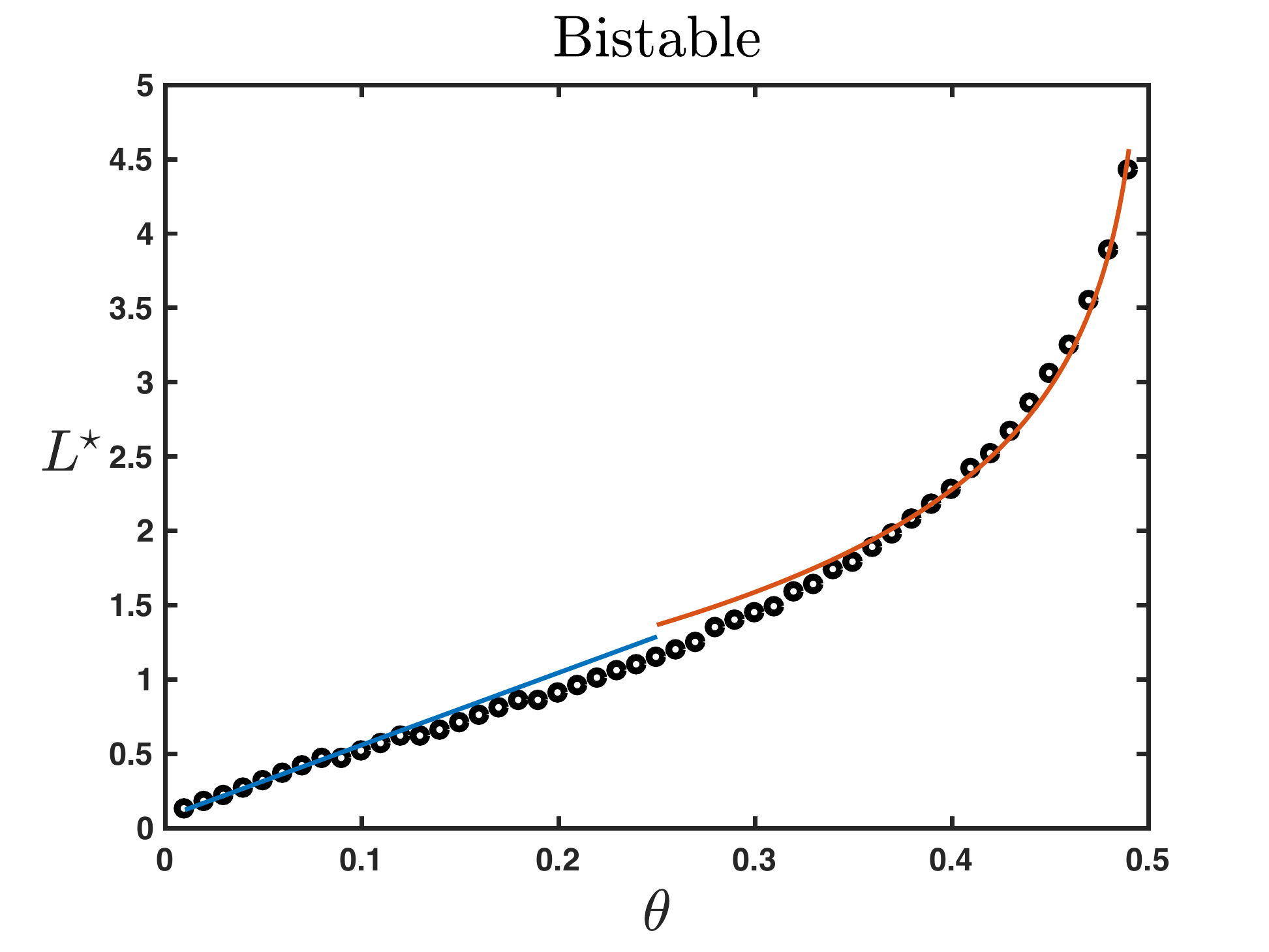}  
\caption{Numerically computed values of $L^\star$ as a function of $\theta$ for the bistable nonlinearity and amplitude of the initial condition fixed to the stable steady state $1$. We also superimposed in plane lines a linear fit near $\theta=0$ (blue line) and a logarithmic fit near $\theta=1/2$ (red line). The measured slope $s_m^0$ of the linear fit near $\theta=0$ is $s_m^0=4.85$. The measured slope $s_m^{1/2}$ of logarithmic fit of $L^\star$ as a function of $\ln \frac{1}{1-2\theta}$ is $s_m^{1/2}=0.99$.}
\label{fig:BisThetaVar}
\end{figure}

\section{Degenerate monostable case}\label{s:degenerate}

In this section we consider $u=u(t,x)$ the solution of the Cauchy problem
\begin{equation}\label{eq:deg-mono}
\begin{cases}
u_t=\Delta u+f(u), &\;t>0,\;x\in\R^{N},\\
u(0,x)=\ep{\mathbf 1}_{B_L}(x), &\;x\in \R^{N},
\end{cases}
\end{equation}
where $\ep\in (0,1]$ and $L>0$ are given parameter, and where $f$ is a monostable degenerate nonlinearity, that is assumed to satisfy the following set of assumptions.

\begin{assumption}[Degenerate monostable nonlinearity $f$]\label{hyp:deg-mono}
The function $f:\R\to\R$ is locally Lipschitz continuous, satisfies $f=0$ on $(-\infty,0]\cup [1,+\infty)$,
\begin{equation*}
f(u)>0,\;\forall u\in (0,1),
\end{equation*}
and
\begin{equation}
0<\liminf_{u\to 0^+}\frac{f(u)}{u^p}\leq \limsup_{u\to 0^+}\frac{f(u)}{u^p}<+\infty,
\end{equation}
for some $p>1$.
\end{assumption}

When $1<p\leq p_F$, where $p_F:=1+\frac 2 N$ is the so called Fujita exponent \cite{Fuj-66}, it is well known \cite{Aro-Wei-78} that the reaction-diffusion equation $u_t=\Delta u+f(u)$ enjoys the hair trigger effect: for any nonnegative and nontrivial initial data, the solution tends to 1 as $t\to +\infty$, locally uniformly in $x\in \R^{N}$. On the other hand, when $p> p_F$, it is known \cite{Aro-Wei-78} that some \lq\lq small enough'' initial data imply extinction, whereas some \lq\lq large enough'' initial data imply propagation.  Our goal is here to provide some quantitative estimates on this threshold phenomenon.

The first result of this section provides an lower estimate for the radius of extinction.

\begin{theo}[Extinction]\label{th:extinction-deg-mono} Let Assumption \ref{hyp:deg-mono} hold with $p>1+\frac 2 N$. Then there is a  constant $C^{-}=C^{-}(f,N)>0$ such that for all $\ep\in (0,1]$, as soon as
\begin{equation}
\label{condition-ext}
L<\frac{C^{-}}{\ep^{\frac{p-1}{2}}},
\end{equation}
the solution to \eqref{eq:deg-mono} satisfies
\begin{equation}
\label{extinction}
\Vert u(t,\cdot)\Vert _{L^{\infty}(\R^{N})} =\mathcal{O}\left(t^{-\frac N 2}\right),  \quad \text{ as } t\to +\infty.
\end{equation}
\end{theo}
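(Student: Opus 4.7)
The plan is to adapt the supersolution construction of Section \ref{s:toy-extinction}. First, I would exploit Assumption \ref{hyp:deg-mono} to turn the local power bound into a global one: there exists $\widetilde C>0$ such that $f(u)\leq \widetilde C u^p$ for all $u\geq 0$. Indeed, the degenerate behavior near $0$ gives $f(u)\leq \widetilde c\, u^p$ on $[0,\delta]$ for some $\delta>0$, while on $[\delta,1]$ the bound $f(u)\leq \Vert f\Vert_{\infty}\leq \frac{\Vert f\Vert_{\infty}}{\delta^p}u^p$ is available, and $f\equiv 0\leq u^p$ on $[1,+\infty)$. This global bound removes any need for an upper--barrier requirement on the supersolution.

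Let $v=v(t,x)$ denote the solution of the heat equation with the same initial datum $\ep\mathbf 1_{B_L}$, so that $v=\ep\,\Gamma(t,\cdot)\ast\mathbf 1_{B_L}$, and set $V(t):=\Vert v(t,\cdot)\Vert_{L^\infty(\R^N)}$. I would seek a supersolution of the form $W(t,x)=v(t,x)\varphi(t)$ with $\varphi(0)=1$. The identity $W_t-\Delta W=v\varphi'$ combined with $f(W)\leq \widetilde C\, W^p=\widetilde C\, v^p\varphi^p$ reduces the supersolution inequality, after taking the supremum in $x$, to the Bernoulli ODE
\[
\varphi'(t)\geq \widetilde C\, V(t)^{p-1}\varphi(t)^p.
\]
Setting $\psi:=\varphi^{1-p}$ linearises this to $\psi'\geq -(p-1)\widetilde C\, V^{p-1}$; I would pick $\varphi$ as the solution obtained with equality, namely
\[
\varphi(t)^{1-p}=1-(p-1)\widetilde C\int_0^t V(s)^{p-1}\,ds,
\]
so that $\varphi$ stays positive and bounded on $[0,+\infty)$ as soon as $(p-1)\widetilde C\int_0^{+\infty}V(s)^{p-1}\,ds<1$.

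The main computation is then to estimate this integral. I would combine the two elementary heat kernel bounds $V(s)\leq \ep$ for all $s>0$ and $V(s)\leq C_N\,\ep L^N s^{-N/2}$ for $s\geq L^2$, and split the integral at $s=L^2$. The supercriticality assumption $p>1+\frac{2}{N}$ enters exactly here, since it gives $\frac{N(p-1)}{2}>1$ and ensures integrability of the tail. A short calculation shows that both pieces are of the same order, so that
\[
\int_0^{+\infty}V(s)^{p-1}\,ds\leq C_{N,p}\,\ep^{p-1}L^2.
\]
Requiring the right-hand side to be smaller than $\frac{1}{(p-1)\widetilde C}$ is precisely condition \eqref{condition-ext} with $C^-:=\big((p-1)\widetilde C\, C_{N,p}\big)^{-1/2}$. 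The comparison principle then gives $u(t,x)\leq W(t,x)\leq \varphi_\infty\, v(t,x)$ with $\varphi_\infty:=\sup_{t\geq 0}\varphi(t)<\infty$, and the standard heat kernel decay $V(t)=\mathcal O(t^{-N/2})$ transfers to $u$, yielding \eqref{extinction}.

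The main technical point I expect to handle is obtaining the sharp estimate that both pieces of the split integral are of order $\ep^{p-1}L^2$. I would do this either by an integration by parts in the spirit of the computation of $G_\ep$ in the proof of Proposition \ref{prop:extinction-toy}, or more directly via the change of variables $s=L^2z$, which factors out cleanly the dependence on $\ep$ and $L$ and pinpoints the Fujita exponent $p_F=1+\frac{2}{N}$ as the exact threshold above which the integral converges, thereby explaining the role of the supercriticality assumption.
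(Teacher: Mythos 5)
Your construction is essentially identical to the paper's: both use the supersolution $W=v\varphi$ with $v$ the heat solution, the global bound $f(u)\leq Ku^p$, the same Bernoulli ODE for $\varphi$, and the same finiteness condition $(p-1)K\int_0^\infty V(s)^{p-1}\,ds<1$ which scales as $L^2\ep^{p-1}$. The paper evaluates the integral by the change of variables $s=L^2 t$ directly (an option you also mention), while you propose a split at $s=L^2$ yielding the same bound; the small sign slip in ``$\psi'\geq -(p-1)\widetilde C V^{p-1}$'' (it should be $\leq$) is harmless since you immediately take equality.
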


\begin{proof} 
Throughout this proof we fix $\ep\in (0,1]$ and 
we consider the solution $v=v(t,x)$ of the heat equation
\begin{equation*}
v_t=\Delta v,\quad t>0,\;x\in\R^{N},  
\end{equation*}
starting from $v(0,\cdot)=u(0,\cdot)=\ep \mathbf 1 _{B_L}$, so that
\begin{equation*}
V(t):=\Vert  v(t,\cdot)\Vert _{L^\infty(\R^N)}=\frac{\ep}{(4\pi)^{\frac N 2}}\int_{\vert x\vert <\frac{L}{\sqrt t}} e^{-\frac{\vert x\vert ^2}{4}}dx=:\ep A_L(t).
\end{equation*}
Following the proof of Proposition \ref{prop:extinction-toy}, we seek a supersolution to \eqref{eq} for $t>0$ and $x\in\R^N$ in the form $U(t,x):=v(t,x)\varphi(t)$, with $\varphi(0)=1$ and $\varphi(t)>0$. Since there is $K>0$ such that $f(u)\leq K u^{p}$ for all $u\in \R$, it is enough to have
$$
\frac{\varphi'(t)}{\varphi^{p}(t)}=K \ep^{p-1}A_L^{p-1}(t),
$$
which is solved as
$$
\varphi(t)=\frac{1}{\left(1-(p-1)K \ep ^{p-1}\int _0 ^t A_L^{p-1}(s)ds\right)^{\frac{1}{p-1}}}.
$$
Since we need $\varphi$ to be global, the condition we obtain reads as
$$
1>(p-1)K \ep^{p-1}\int _0 ^{+\infty} A_L^{p-1}(s)ds.
$$
Notice that, for a given $L>0$, there exists some positive constant $C_{N,L}$ such that $A_L(t)\sim \frac{C_{N,L}}{t^{\frac N 2}}$ as $t\to +\infty$ so that the above improper integral does converge since  $p>1+\frac 2 N$. 
The above condition rewrites as
\begin{equation*}
1>(p-1)K \ep^{p-1} \frac 1{(4\pi)^{\frac{(p-1)N}{2}}}\int _0 ^{+\infty} \left(\int_{\vert x\vert <\frac{L}{\sqrt s}} e^{-\frac{\vert x\vert ^2}{4}}dx\right)^{p-1}ds,
\end{equation*}
or, letting $s=L^2 t$,
\begin{equation*}
1>(p-1)K \ep^{p-1} \frac 1{(4\pi)^{\frac{(p-1)N}{2}}} L^{2}\int _0 ^{+\infty} \left(\int_{\vert x\vert <\frac{1}{\sqrt t}} e^{-\frac{\vert x\vert ^2}{4}}dx\right)^{p-1}dt=:CL^{2}\ep^{p-1},
\end{equation*}
for some constant $C=C(p,K,N)>0$. Hence, when this condition is fulfilled, we are equipped with a global supersolution $U(t,x)=v(t,x)\varphi(t)$ with $\varphi$ bounded which ensures that
\eqref{extinction} holds. This concludes the proof.
\end{proof}

Our next result is concerned with a lower estimate for the radius leading to propagation. Contrary to the previous result that provides an estimate valid for any $\ep\in (0,1]$, here our lower estimate is valid for $0<\ep\ll 1$ and it reads as follows.

\begin{theo}[Propagation]\label{th:prop-deg-mono}  Let Assumption \ref{hyp:deg-mono} hold. 
Then there exist  $C^{+}=C^{+}(f,N)>0$ and  $\ep_0>0$ small enough such that, for all $\ep\in (0,\ep_0)$, there is $L_\ep>0$ satisfying
\begin{equation}
\label{condition-prop}
L_\ep\sim  \frac {C^{+}} {\ep^{\frac{p-1}2}} \left(\ln\frac{1}{\ep}\right)^{\frac 12}\; \text{ as } \ep\to 0,
\end{equation}
such that, for all $L>L_\ep$, the solution to \eqref{eq:deg-mono} satisfies
\begin{equation}
\label{qqch}
\lim _{t\to +\infty} u(t,x)= 1\text{ locally uniformly in } \R^{N}.
\end{equation}
\end{theo}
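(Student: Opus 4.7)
The plan is to mirror the spirit of Proposition~\ref{prop:prop-toy}, constructing a sub-solution of the form $U(t,x) = v(t,x)\,\varphi(t)$, where $v$ is the solution of the linear heat equation starting from $\ep\,\mathbf{1}_{B_L}$ and $\varphi$ is a time-amplification factor driven by the degenerate nonlinearity. By Assumption~\ref{hyp:deg-mono}, there exist $k > 0$ and $\delta_0 \in (0,1)$ such that $f(u) \geq k u^p$ on $[0,\delta_0]$, and this is the only structural information on $f$ that will be used. The scaling $L_\ep \sim \ep^{-(p-1)/2}\sqrt{\ln(1/\ep)}$ emerges from a balance, akin to \eqref{select-Lep2}, between the nonlinear ODE time (the blow-up time $T_\ep \sim \ep^{-(p-1)}$ of $\dot\varphi = k\,V_\kappa^{p-1}\varphi^p$ when $V_\kappa \sim \ep$) and the diffusion time $L_\ep^2$ needed to keep $v$ essentially flat on a sub-ball of $B_L$ over the interval $[0,T_\ep]$.

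Computing $U_t - \Delta U = v\,\varphi'$, the sub-solution inequality $U_t - \Delta U \leq f(U)$ reduces, as long as $U \leq \delta_0$, to $\varphi'(t) \leq k\,v(t,x)^{p-1} \varphi^p(t)$. To enforce this pointwise, I would restrict to a sub-ball $B_{\kappa L}$ with $\kappa \in (0,1)$; by radial monotonicity of the heat kernel starting from $\ep\,\mathbf{1}_{B_L}$, one has $v(t,x) \geq V_\kappa(t) := v(t, \kappa L\,\vec{e})$ there, so it suffices to solve the Bernoulli ODE $\varphi' = k\,V_\kappa^{p-1}\varphi^p$, $\varphi(0) = 1$, explicitly as
$$
\varphi(t) = \left[1 - (p-1)k \int_0^t V_\kappa(s)^{p-1}\,ds\right]^{-1/(p-1)}.
$$
A self-similar rescaling yields $V_\kappa(t) = \ep\,B(\kappa,\, t/L^2)$ for some universal profile $B$ with $B(\kappa,\tau) \to 1$ as $\tau \to 0^+$. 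With $L = L_\ep$ and $T_\ep = \beta\,\ep^{-(p-1)}$, $\beta$ chosen slightly larger than $1/((p-1)k)$, one gets $T_\ep/L_\ep^2 \to 0$, hence $(p-1)k\int_0^{T_\ep} V_\kappa^{p-1}\,ds \to (p-1)k\beta > 1$, forcing $\varphi(T_\ep) \geq \delta_0/\ep$. This in turn gives $u(T_\ep,x) \geq U(T_\ep,x) \geq \delta_0/2$ on the sub-ball $B_{\kappa L_\ep}$, whose radius tends to $+\infty$ as $\ep \to 0^+$. Since $p > p_F$, one then invokes the compactly-supported ground-state type sub-solutions of Aronson--Weinberger~\cite{Aro-Wei-78} for the monostable problem to propagate from this configuration, yielding \eqref{qqch}.

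The main technical obstacle lies in justifying that $U = v\varphi$ is a bona fide sub-solution: the differential inequality only holds on $B_{\kappa L}$, and the lateral boundary condition $U \leq u$ on $\partial B_{\kappa L}$ is only automatic (via $u \geq v$) when $\varphi \leq 1$, which conflicts with the desired large amplification $\varphi \sim \delta_0/\ep$. Two natural remedies are (i) to replace $v$ by the Dirichlet heat solution on a larger ball $B_M$, so that the corresponding $\tilde U$ vanishes on $\partial B_M$ and the boundary comparison is trivial, at the cost of quantifying the loss between $v$ and $\tilde v$ on a relevant time window; or (ii) to iterate in short time steps on which $\varphi$ stays close to $1$, restarting from the improved lower bound after each step. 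One also has to handle the transition out of the regime $U \leq \delta_0$ (beyond which $f \geq k u^p$ is no longer the useful estimate); however, once $U$ reaches a fixed fraction of $\delta_0$ on a ball of sufficiently large radius, the standard monostable propagation theory for $p > p_F$ takes over and the remainder of the argument is routine.
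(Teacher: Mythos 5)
Your heuristic scaling is correct, and you deserve credit for accurately isolating the fatal flaw in the product ansatz $U(t,x)=v(t,x)\varphi(t)$: the differential inequality $U_t-\Delta U\leq f(U)$ holds only on the sub-ball $B_{\kappa L}$ where you replace $v$ by $V_\kappa$, yet you need $\varphi\gg 1$, so the lateral comparison $U\leq u$ on $\partial B_{\kappa L}$ cannot be obtained from $u\geq v$ and $\varphi\leq 1$. However, both of your proposed repairs are left as sketches, so the proof is not closed as written. The paper avoids the obstacle altogether by using a structurally different subsolution. Instead of a purely time-dependent amplification, it sets $u^-(t,x):=Y(t,v(t,x))$, where $Y(\tau,\xi)$ solves $\partial_\tau Y=(Y)_+^p$, $Y(0,\xi)=\xi$, explicitly $Y(\tau,\xi)=\xi\bigl(1-(p-1)\tau\xi^{p-1}\bigr)^{-1/(p-1)}$ for $\xi>0$. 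Because $Y$ is convex in $\xi$, the composition of a heat solution with $Y$ is a subsolution of the heat equation, and a direct computation (after normalizing $r=1$ so that $f(u)\geq u^p$ on $[0,\delta]$) gives
\begin{equation*}
u^-_t-\Delta u^--f(u^-)\;\leq\; Y_\tau - |\nabla v|^2\,Y_{\xi\xi} - Y^p \;=\; -|\nabla v|^2\,Y_{\xi\xi}\;\leq\;0
\end{equation*}
everywhere on $\R^N$, with $u^-(0,\cdot)=v(0,\cdot)=u(0,\cdot)$. There is no lateral boundary to control and the comparison principle applies globally.

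Note that $Y(t,v)$ is \emph{not} of the form $v\cdot\varphi(t)$: the amplification $\bigl(1-(p-1)t\,v^{p-1}\bigr)^{-1/(p-1)}$ depends on $v$ and hence on $x$, and this spatial dependence is exactly what is missing in your construction and what makes the paper's subsolution globally valid. The remainder of the argument proceeds as you anticipate: the ODE time $T_\ep=\frac{1}{(p-1)\ep^{p-1}}-\frac{1}{(p-1)\delta^{p-1}}$ amplifies from $\ep$ to $\delta$, the tail estimate on $v(T_\ep,\cdot)$ over $B_{kL_\ep}$ with $L_\ep=\gamma\sqrt{T_\ep\ln\frac1\ep}$ gives $u(T_\ep,\cdot)\geq\delta'$ there (mirroring Proposition~\ref{prop:prop-toy}), and the threshold result of Proposition~\ref{prop:threshold} (rather than your appeal to Aronson--Weinberger, though both work) propagates to $1$. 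If you wanted to rescue your product ansatz, remedy (i) (Dirichlet heat solution on a larger ball $B_M$) is the more promising route, since $T_\ep/L_\ep^2\to 0$ keeps the Dirichlet and free solutions close on the relevant window, but this requires a quantitative estimate you have not supplied; the paper's change of dependent variable renders all of that unnecessary.
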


\begin{proof}
From Assumption \ref{hyp:deg-mono}, there are $\delta>0$ and $r>0$ such that $f(u)\geq r u^p$ for all $0\leq u\leq \delta$. Up to a time-space rescaling argument we can assume, without loss of generality, that $r=1$.
As a direct consequence of Proposition \ref{prop:threshold} (for the ignition case), Problem \eqref{eq:deg-mono} enjoys the threshold phenomena that ensures that for any $\alpha\in (0,1)$ there exists a radius $\widetilde L_\alpha>0$ such that for all $L\geq \widetilde L_\alpha$ the function $w_L=w_L(t,x)$ defined as the resolution of \eqref{eq:deg-mono} with the initial datum $w(0,x)=\alpha {\bf 1}_{B_L}(x)$ satisfies 
$$
w_L(t,x)\to 1\text{ as $t\to+\infty$ locally uniformly for $x\in\R^N$}.
$$

Now to prove Theorem \ref{th:prop-deg-mono}, let us consider $Y(\tau,\xi),$ the solution of the ODE Cauchy problem, where $\xi \in \R$ serves as a parameter,
$$
\partial_\tau Y(\tau,\xi)=(Y)_+^p,\quad \;Y(0,\xi)=\xi,
$$
namely
$$
Y(\tau,\xi)=\begin{cases} \xi &\text{ if $\tau>0$ and } \xi\leq 0,\\
\frac{1}{\left(-(p-1)\tau+\frac{1}{\xi^{p-1}}\right)^{\frac{1}{p-1}}} &\text{ if $\tau>0$, $\xi>0$ and $\tau<\frac 1{(p-1)\xi^{p-1}}$}.
\end{cases}
$$
For $0<\ep<\delta$ we set
\begin{equation}\label{def-T-ter}
T_\ep:=\frac{1}{(p-1)\ep^{p-1}}-\frac{1}{(p-1)\delta^{p-1}}.
\end{equation}
And observe that one has
$$
0<Y(\tau,\xi)<\delta, \quad \forall \tau\in(0,T^\ep), \forall \xi\in(0,\ep).
$$
Again we consider the solution $v=v(t,x)$ of the heat equation starting from $v(0,\cdot)=u(\cdot,0)=\ep\mathbf 1_{B_L}$, and define
$$
u^-(t,x):=Y(t,v(t,x)), \quad 0\leq t\leq T^\ep, x\in \R^{N}.
$$
For $0<t\leq T^\ep$, $x\in \R^N$, we have
\begin{eqnarray*}
u^-_t-\Delta u^--f(u^-)&=& Y_\tau+v_t Y_\xi -\Delta v Y_\xi-\vert \nabla v\vert ^2 Y_{\xi\xi}-f(Y)\\
&\leq &Y_\tau+v_t Y_\xi -\Delta v Y_\xi-\vert \nabla v\vert ^2 Y_{\xi\xi}-Y^{p}\\
&=& -\vert \nabla v\vert ^2 Y_{\xi\xi}\\
&\leq & 0,
\end{eqnarray*}
since one can easily check that $Y_{\xi\xi}(\tau,\xi)\geq 0$ for all $\tau>0$, $\xi>0$ with $\tau<\frac 1{(p-1)\xi^{p-1}}$. Hence, for each $\ep\in (0,\delta)$, the comparison principle applies and yields
\begin{equation}
\label{u-plus-grand}
u(T_\ep,x)\geq Y(T_\ep,v(T_\ep,x)), \quad \forall x\in \R^N.
\end{equation}

Next, we have 
$$
v(T_\varepsilon,x)=\varepsilon-\frac{\varepsilon}{(4\pi T_\ep)^{N/2}}\int_{|y|\geq L} e^{-\frac{|x-y|^2}{4T_\ep}}dy.
$$
As a result, for any $k\in(0,1)$, for all $|x|\leq kL$ one has, since $|y|\geq L$ ensures that $|x|\leq kL\leq k|y|$ and $|x-y|\geq (1-k)|y|$, 
\begin{eqnarray}
v(T_\varepsilon,x)&\geq & \ep-\frac{\varepsilon}{(4\pi T_\varepsilon)^{N/2}}\int_{|y|\geq L} e^{-\frac{(1-k)^2|y|^2}{4T_\varepsilon}}dy\nonumber\\
&\geq & \ep-\frac{\varepsilon}{(1-k)^N\pi ^{N/2}}\int_{|z|\geq \frac{(1-k)L}{\sqrt {4T_\varepsilon}} }e^{-|z|^2}dz:=\xi_\ep.\label{truc}
\end{eqnarray}

Now fix $0<\delta'<\delta$ and $k\in (0,1)$. We look for a condition ensuring that, for all $x\in B_{kL}$, we have $Y(T_\ep,v(T_\ep,x))\geq \delta'$, that reads as 
$$
\left[-(p-1) T_\ep+\xi_\ep^{1-p}\right]^{\frac{1}{1-p}}\geq \delta'.
$$
Recalling the definition of $T_\ep$ in \eqref{def-T-ter} this rewrites as
$$
\xi_\ep\geq \ep \left[1+\ep^{p-1}\left[\frac{1}{\left(\delta'\right)^{p-1}}-\frac{1}{(\delta)^{p-1}}\right]\right]^{\frac{1}{1-p}}.
$$
Denoting $C:=\left[\frac{1}{\left(\delta'\right)^{p-1}}-\frac{1}{(\delta)^{p-1}}\right]>0$, the above inequality becomes
$$
\xi_\ep\geq \ep \left[1+\ep^{p-1}C\right]^{\frac{1}{1-p}}=\ep-\frac{\ep^{p}C}{p-1}+\mathcal O\left(\ep^{2p-1}\right).
$$
In view of \eqref{truc} it is sufficient to have, for $0<\ep\ll1$,
\begin{equation}\label{truc-bidule-bis}
\frac{\varepsilon}{(1-k)^N\pi ^{N/2}}\int_{|z|\geq \frac{(1-k)L}{\sqrt {4T_\varepsilon}} }e^{-|z|^2}dz\leq \frac{\ep^{p}C}{p-1}+\mathcal O\left(\ep^{2p-1}\right).
\end{equation}
We now select
\begin{equation}
\label{select-Lep2-bis}
L_\varepsilon=\gamma \sqrt{T_\varepsilon \ln\frac{1}{\varepsilon}},
\end{equation}
for some constant $\gamma>0$. 
As in the proof of Proposition \ref{prop:prop-toy}, if $I_\ep$ denotes the integral in the left hand side of \eqref{truc-bidule-bis}, we have, as $\varepsilon\to 0$,
\begin{equation*}
I_\varepsilon \sim \frac{A_N}{2}\left(\frac{(1-k)}{2}\gamma\sqrt{\ln\frac{1}{\varepsilon}}\right)^{N-1} \exp\left(-\frac{(1-k)^2}{4}\gamma^2\ln \frac 1 \ep \right),
\end{equation*}
for some constant $A_N>0$ independent of $\ep$ small enough.
Using this we see that \eqref{truc-bidule-bis} holds true, for $0<\ep\ll1$, as soon as $\gamma>0$ is large enough so that
\begin{equation}\label{condition-ter}
\frac{(1-k)^2}{4}\gamma^2>p-1.
\end{equation}

We have thus proved that, for any $k\in(0,1)$ and $\gamma >0$ such that \eqref{condition-ter} holds, we have that, for all $0<\ep\ll 1$, 
$$
\min _{\vert x\vert \leq kL_\ep}u(T_\ep,x)\geq \alpha '.
$$
As a consequence, choosing $\ep$ small enough so that $kL_\ep\geq \widetilde L_{\alpha'}$, the comparison principle ensures that \eqref{qqch} holds true, as soon as $\ep>0$ is sufficiently small. This completes the proof of the theorem.
\end{proof}

As in Section 1, introducing for each $\ep\in (0,1]$ the radii $L_\ep^{ext}$ and $L_\ep^{prop}$, from Theorem \ref{th:extinction-deg-mono} and Theorem \ref{th:prop-deg-mono}, we immediately infer the following.

\begin{cor}[Threshold radii for degenerate monostable case] Let Assumption \ref{hyp:deg-mono} hold with $p>1+\frac 2 N$. Then the threshold radii $L_\ep^{ext}\leq L_\ep^{prop}$ for \eqref{eq:deg-mono} satisfy
\begin{equation*}
0<\liminf_{\ep\to 0} \ep^{\frac{p-1}{2}}L_\ep^{ext} \; \text{ and }\; \limsup_{\ep \to 0} \frac{\ep^{\frac{p-1}{2}}}{\left(\ln\frac 1 \ep\right)^{\frac 12}}L_\ep^{prop}<+\infty.
\end{equation*}
\end{cor}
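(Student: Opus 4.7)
The plan is to read off both bounds directly from Theorems \ref{th:extinction-deg-mono} and \ref{th:prop-deg-mono}, once one introduces, for each $\ep\in(0,1]$, the analogues of the threshold radii from Section \ref{s:intro}, namely
\[
L_\ep^{ext}:=\sup\{L>0:\ u(t,\cdot)\to 0\text{ uniformly in }\R^N\}
\]
and
\[
L_\ep^{prop}:=\inf\{L>0:\ u(t,\cdot)\to 1\text{ in }C^0_{\mathrm{loc}}(\R^N)\},
\]
where $u=u(t,x)$ denotes the solution of \eqref{eq:deg-mono} with initial datum $\ep\mathbf 1_{B_L}$. Nonemptiness of the two sets for each small $\ep$ is furnished respectively by Theorems \ref{th:extinction-deg-mono} and \ref{th:prop-deg-mono}, and the ordering $L_\ep^{ext}\leq L_\ep^{prop}$ follows from the monotonicity in $L$ of $u$ together with the mutually exclusive character of the two asymptotic behaviors.

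For the first inequality I would invoke Theorem \ref{th:extinction-deg-mono}: the constant $C^{-}=C^{-}(f,N)>0$ supplied there ensures that every $L<C^{-}\ep^{-(p-1)/2}$ forces $\|u(t,\cdot)\|_{\infty}=\mathcal{O}(t^{-N/2})$, in particular uniform convergence to $0$ on $\R^N$. Every such $L$ therefore lies in the set whose supremum defines $L_\ep^{ext}$, so $L_\ep^{ext}\geq C^{-}\ep^{-(p-1)/2}$. Multiplying by $\ep^{(p-1)/2}$ and passing to the $\liminf$ as $\ep\to 0^+$ yields
\[
\liminf_{\ep\to 0}\ep^{(p-1)/2}L_\ep^{ext}\geq C^{-}>0.
\]

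For the second inequality, Theorem \ref{th:prop-deg-mono} produces, for every $\ep\in(0,\ep_0)$, a radius $L_\ep$ with $L_\ep\sim C^{+}\ep^{-(p-1)/2}(\ln 1/\ep)^{1/2}$ such that $L>L_\ep$ implies $u(t,\cdot)\to 1$ locally uniformly in $\R^N$. By definition of $L_\ep^{prop}$ this gives $L_\ep^{prop}\leq L_\ep$, whence
\[
\limsup_{\ep\to 0}\frac{\ep^{(p-1)/2}}{(\ln 1/\ep)^{1/2}}L_\ep^{prop}\leq \lim_{\ep\to 0}\frac{\ep^{(p-1)/2}}{(\ln 1/\ep)^{1/2}}L_\ep=C^{+}<+\infty.
\]
I do not expect any real obstacle in this argument: it is a pure bookkeeping consequence of the two preceding theorems, the only slightly delicate point being the well-definedness issue already addressed in the first paragraph.
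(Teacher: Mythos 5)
Your argument is correct and is exactly the ``immediate inference'' the paper has in mind: the paper states this corollary with no written proof, deeming it a direct consequence of Theorems \ref{th:extinction-deg-mono} and \ref{th:prop-deg-mono}, and your bookkeeping (lower bound on $L_\ep^{ext}$ from the extinction radius, upper bound on $L_\ep^{prop}$ from the propagation radius) is precisely that inference.
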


Note that in the one dimensional case, namely $N=1$, it is known that the threshold is sharp, that is $L_\ep^{ext}=L_\ep^{prop}:=L_\ep^\star$ for any $\ep\in (0,1]$. We refer to Du and Matano \cite{Du-Mat-10} for the proof of this sharpness. We also refer to \cite[Theorem 7, Theorem 8, Theorem 9]{Mur-Zho-17} for further results and conditions insuring that the threshold is sharp in the multi-dimensional degenerate monostable case.

As a special case, when $f(u)=ru^p(1-u){\bf 1}_{[0,1]}(u)$ with $p>1+\frac 2 N$, the threshold is sharp (see the proof of Corollary \ref{cor:prototype}), and the above corollary provides rather refined estimates of this quantity in the asymptotic $\ep\to 0$, that can be reformulated as follows.

\begin{cor}[Sharp threshold for prototype degnerate monostable nonlinearity]\label{cor:sharp-mono}
Let $\ep\in (0,1]$ be given. Consider the prototype nonlinearity
$$
f(u)=ru^p(1-u){\bf 1}_{(0,1)}(u), \quad  p>1+\frac 2 N,
$$
where $r>0$.  Then the problem \eqref{eq:deg-mono} exhibits a sharp threshold $L_\ep^\star$ that satisfies
\begin{equation*}
0<\liminf_{\ep\to 0} \ep^{\frac{p-1}{2}}L_\ep^{\star} \; \text{ and }\; \limsup_{\ep \to 0} \frac{\ep^{\frac{p-1}{2}}}{\left(\ln\frac 1 \ep\right)^{\frac 12}}L_\ep^{\star}<+\infty.
\end{equation*}
\end{cor}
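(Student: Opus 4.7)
The quantitative bounds in the statement are precisely those of $L_\ep^{ext}$ (from Theorem \ref{th:extinction-deg-mono}) and of $L_\ep^{prop}$ (from Theorem \ref{th:prop-deg-mono}) rephrased for the prototype nonlinearity $f(u)=ru^p(1-u)\mathbf{1}_{(0,1)}(u)$. Hence the only substantive content to establish is sharpness, namely $L_\ep^{ext}=L_\ep^{prop}=:L_\ep^\star$ for every $\ep\in(0,1]$.

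My plan is to mimic the argument used in the proof of Corollary \ref{cor:prototype}. Fix $\ep\in(0,1]$ and, using Theorem \ref{th:prop-deg-mono}, pick some $\overline L>L_\ep^{prop}$, so that $u_{\overline L}^\ep(t,\cdot)\to 1$ locally uniformly as $t\to +\infty$. Since the potential $F(u)=-\int_0^u f(v)dv=r\bigl(\tfrac{u^{p+2}}{p+2}-\tfrac{u^{p+1}}{p+1}\bigr)$ satisfies $F(1)=-\tfrac{r}{(p+1)(p+2)}<0$, I would argue, exactly as in the proof of Corollary \ref{cor:prototype}, that there is some $\overline t>0$ with $\mathcal{E}(u_{\overline L}^\ep(\overline t,\cdot))<0$. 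I would then restart from the new family of initial data $\{u_L^\ep(\overline t,\cdot)\}_{L>0}$: by the comparison principle it is nondecreasing in $L$; each profile is smooth, radial and decays to $0$ at infinity (parabolic smoothing of the compactly supported datum); the small-$L$ members go to extinction (by continuous dependence together with Theorem \ref{th:extinction-deg-mono}) while the member $L=\overline L$ has strictly negative energy. Feeding this family into the sharp-threshold results of Muratov and Zhong \cite[Theorems 7--9]{Mur-Zho-17} for degenerate monostable equations with $p>1+\tfrac{2}{N}$ would then produce a unique threshold member in the family, yielding $L_\ep^{ext}=L_\ep^{prop}$.

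The main obstacle, as in Corollary \ref{cor:prototype}, is checking that the Muratov--Zhong framework is applicable to the auxiliary family $\{u_L^\ep(\overline t,\cdot)\}_{L>0}$: one must verify that each profile belongs to the function space on which their energy functional is well-defined (which follows from parabolic regularity and the Gaussian-type decay in $|x|$ of $u_L^\ep(\overline t,\cdot)$), that the monotonicity and continuity in $L$ required by their framework hold, and that their structural assumptions on the degenerate nonlinearity near $0$ and near $1$ are met by $f(u)=ru^p(1-u)\mathbf{1}_{(0,1)}(u)$. Once these routine verifications are carried out, sharpness follows as a black box and the quantitative asymptotics then come from the preceding corollary, completing the proof.
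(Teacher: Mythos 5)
Your proposal is correct and follows essentially the same route the paper takes: the quantitative asymptotics are simply Theorems \ref{th:extinction-deg-mono} and \ref{th:prop-deg-mono} (as collected in the preceding corollary), and the sharpness is obtained by transporting the argument of Corollary \ref{cor:prototype} to the monostable setting, i.e.\ running the equation to a time $\overline t$ where the energy of the propagating member becomes negative and then applying the Muratov--Zhong sharp-threshold results for degenerate monostable nonlinearities to the nondecreasing family $\{u_L^\ep(\overline t,\cdot)\}_{L>0}$. The paper does exactly this, stating it by reference to the proof of Corollary \ref{cor:prototype} and to \cite[Theorems 7--9]{Mur-Zho-17}.
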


 \noindent{\bf Acknowledgements.} 
 M. Alfaro is supported by the 
ANR \textsc{i-site muse}, project \textsc{michel} 170544IA (n$^{\circ}$ ANR \textsc{idex}-0006).


\begin{thebibliography}{99}


\bibitem{Alf-Duc-18} M. Alfaro and A. Ducrot, Population invasion with bistable dynamics and adaptive evolution: the evolutionary rescue, Proc. Amer. Math. Soc. 146 (2018), 4787--4799.

\bibitem{Aro-Wei-75} D. G. Aronson and H. F. Weinberger,  Nonlinear diffusion in population genetics, combustion, and nerve pulse propagation. In Partial differential equations and related topics, Lecture notes in Math. 446, Spriner, Berlin, 1975.

\bibitem{Aro-Wei-78} D. G. Aronson and H. F. Weinberger, Multidimensional nonlinear diffusion arising in population genetics, Adv. Math. 30 (1978), 33--76.

\bibitem{Du-Mat-10} Y. Du and H. Matano, Convergence and sharp thresholds for
propagation in nonlinear diffusion problems, J. Eur. Math. Soc. 12 (2010), 279--312.

\bibitem{FK18} G. Faye and Z.P. Kilpatrick, Threshold of front propagation in neural fields: An interface dynamics approach,
SIAM J. Appl. Math. 78-5 (2018), pp. 2575--2596. 

\bibitem{Fif-Leo-77} P. C. Fife and J. B. McLeod, The approach of solutions of nonlinear diffusion equations to
              travelling front solutions, Arch. Ration. Mech. Anal. 65 (1977), 335--361.
              
\bibitem{flores}
G. Flores, The stable manifold of the standing wave of the Nagumo equation, J. of Differential Equations, 80 (1989), 306--314.
              
\bibitem{Fuj-66} H. Fujita, On the blowing up of solutions of the Cauchy problem for
              $u_{t}=\Delta u+u^{1+\alpha }$, J. Fac. Sci. Univ. Tokyo Sect. I 13 (1966), 109--124.

\bibitem{IB08} I. Idris and V. N. Biktashev, Analytical Approach to Initiation of Propagating Fronts, PRL 101, 244101 (2008).

\bibitem{Kanarek-Webb} A. R. Kanarek and C. T. Webb, Allee effects, adaptive evolution, and invasion success, Evolutionary Applications 3 (2010), 122--135.

\bibitem{Kan-64} Ja. I. Kanel', Stabilization of the solutions of the equations of combustion theory with finite initial functions, Mat. Sb. (N. S.) 65 (107) (1964), 398--413.

\bibitem{McK-Moll}
H.P. McKean and V. Moll, Stabilization of the standing wave in a simple caricature of the nerve equation, Communications on Pure and Applied Mathematics 39, (1986), 485--529.

\bibitem{Moll-Ros}
V. Moll and S.I. Rosencrans, Calculation of the threshold surface for nerve equations, SIAM J. Appl. Math. 50 no 5, (1990), 1419--1441.
              
\bibitem{Mur-Zho-13} C. B. Muratov and X. Zhong, Threshold phenomena for symmetric decreasing solutions of reaction-diffusion equations, NoDEA Nonlinear Differ. Equ. Appl. 20 (2013), 1519--1552. 

\bibitem{Mur-Zho-17} C. B. Muratov and X. Zhong, Threshold phenomena for symmetric-decreasing radial solutions of reaction-diffusion equations, Discrete Contin. Dyn. Syst.  37  (2017), 915--944.

\bibitem{Neu-etal-97} J.C. Neu, R.S. Preissig Jr, W. Krassowska, Initiation of propagation in a one-dimensional excitable medium, Physica D: Nonlinear Phenomena 102(3-4) (1997), 285--299. 

\bibitem{Pol-11} P. Pol\'{a}\v{c}ik, Threshold solutions and sharp transitions for nonautonomous parabolic equations on $\R^N$, Arch. Rational Mech. Anal. 199 (2011), 69--97.

\bibitem{Terman}
D. Terman, A free boundary problem arising from a bistable reaction-diffusion equation, SIAM J. Math. Anal. 14 no 6, (1983), 1107--1129.

\bibitem{Zla-06} A. Zlat\v{o}s,  Sharp transition between extinction and propagation of reaction, J. Amer.
Math. Soc. 19 (2006), 251--263.
\end{thebibliography}
\end{document}